\newtheorem{theorem}{Theorem}[section]
\newtheorem{lemma}[theorem]{Lemma}
\newtheorem{remark}[theorem]{Remark}
\newenvironment{definition}[1][Definition]{\begin{trivlist}
\item[\hskip \labelsep {\bfseries #1}]}{\end{trivlist}}
\newenvironment{notation}[1][Notation]{\begin{trivlist}
\item[\hskip \labelsep {\bfseries #1}]}{\end{trivlist}}
\newcommand{\mcf}{\mathcal{F}}
\newcommand{\mcc}{\mathcal{C}}
\newcommand{\mcd}{\mathcal{D}}
\newcommand{\mcr}{\mathcal{R}}
\newcommand{\mcs}{\mathcal{S}}
\newcommand{\mcg}{\mathcal{G}}
\newcommand{\mcp}{\mathcal{P}}
\begin{document}

\title{Forbidden subposet problems with size restrictions}
\author{D\'{a}niel T. Nagy\footnote{E\"{o}tv\"{o}s Lor\'and University, Budapest. dani.t.nagy@gmail.com}}
\maketitle

\begin{abstract}
Upper bounds to the size of a family of subsets of an $n$-element set that avoids certain configurations are proved. These forbidden configurations can be described by inclusion patterns and some sets having the same size. Our results are closely related to the forbidden subposet problems, where the avoided configurations are described solely by inclusions.
\end{abstract}

\section{Introduction}

In this paper, a generalization of the forbidden subposet problem is discussed. Before getting to this generalization, let us overview the original problem. We will use the notation $[n]=\{1,2,\dots, n\}$.

\begin{definition}
Let $\mathcal{P}$ be a finite poset (partially ordered set) with the relation $<_p$. Let $f$ be a function that maps the elements of $\mathcal{P}$ to subsets of $[n]$. We say that $f$ is an {\it embedding} of $\mathcal{P}$ if it is an injective function that satisfies $f(a)\subset f(b)$ for all $a<_p b$. Similarly, $f$ is called an {\it induced embedding} if it is an injective function such that $f(a)\subset f(b)$ if and only if $a<_p b$.
\end{definition}

\begin{definition}
Let $\mathcal{P}_1, \mathcal{P}_2, \dots \mathcal{P}_k$ be finite posets. La($n, \{\mathcal{P}_1, \dots \mathcal{P}_k\}$) denotes the size of the largest family $\mathcal{F}$ of subsets of $[n]$ such that none of the posets $\mathcal{P}_i$ can be embedded into $\mathcal{F}$. Similarly, La$^\star(n, \{\mathcal{P}_1, \dots \mathcal{P}_k\}$) denotes the size of the largest family $\mathcal{F}$ of subsets of $[n]$ such that none of the posets $\mathcal{P}_i$ has an induced embedding into $\mathcal{F}$. (In most problems, we have only one forbidden poset, and we write  La($n, \mathcal{P}$) or La$^\star(n, \mathcal{P}$).)
\end{definition}

The goal in the forbidden subposet problem is to exactly or asymptotically determine the value of the functions La($n, \mathcal{P}$) and La$^\star(n, \mathcal{P}$) for as many posets as possible. There is no general theorem that applies to all posets. However, it is conjectured by all involved researchers that for all $\mcp$, the value of the limit
$$\lim_{n\rightarrow\infty} \frac{La(n,\mcp)}{\binom{n}{\lfloor\frac{n}{2}\rfloor}}$$
is an integer. In all solved cases, the extremal families consist of sets whose sizes are as close to $\frac{n}{2}$ as possible. The problem is asymptotically solved for posets whose Hasse diagram is a tree. (See \cite{bukh} for the noninduced problem and \cite{indtree} for the induced problem.) Upper bounds were given to La($n, \mathcal{P}$), depending on $|\mcp|$ and the length of the longest chain in $\mcp$ \cite{burcsi} \cite{chen} \cite{grosz}.

Roughly speaking, these forbidden poset problems ask for the maximal size of a set family without a configuration (or configurations) that can be described entirely by inclusion. In this paper we consider problems where there are two types of conditions in the forbidden configuration(s): inclusion and certain subsets being required to have the same size. In the next section, we prove many such results and compare them to their counterparts without size restrictions. In the last section, a general theorem is proved. It states that for any such forbidden configuration $S$ there exists a number $C$ such that $|\mcf|\le C\binom{n}{\lfloor\frac{n}{2}\rfloor}$ holds for any family $\mcf$ of subsets of $[n]$ that avoids $S$.

Counting via chains is an essential method to deal with these kind of problems. In the rest of this section, we overview the basics of this technique.

\begin{notation}
Let $A\subset B$ two sets. A \it{chain} between $A$ and $B$ is a family of sets $A=C_{|A|}\subset C_{|A|+1}\subset \dots \subset C_{|B|-1}\subset C_{|B|}=B$, where $|C_i|=i$ for all $|A|\le i\le |B|$. When we say "all chains of $[n]$" we mean the chains between $\emptyset$ and $[n]$. (There are $n!$ such chains.)
\end{notation}

\begin{notation}
$$\Sigma (n,k)=\displaystyle\sum_{i=\lceil\frac{n-k}{2}\rceil}^{\lceil\frac{n+k}{2}\rceil-1} {n \choose i}$$
denotes the sum of the $k$ largest binomial coefficients belonging to $n$.
\end{notation}

\begin{notation}
Let $\mcf$ be a family of subsets of $[n]$. The {\it Lubell function} of $\mcf$ is defined as
$$\lambda(\mcf)=\sum_{F\in\mcf} \binom{n}{|F|}^{-1}.$$
(The name refers to Lubell's proof of Sperner's theorem \cite{Lub}.)
\end{notation}

Since a set $F$ appears in $|F|!(n-|F|)!$ chains out of $n!$, the probability of it being in a random chain is $\binom{n}{|F|}^{-1}$. Denoting the set of all chains of $[n]$ by $\mcc$, the expected number of the elements of $\mcf$ in a random chain is
\begin{equation}\label{lubeq}
\underset{c\in \mcc}{ave}(|c\cap\mcf|)=\sum_{F\in\mcf} \binom{n}{|F|}^{-1}=\lambda(\mcf).
\end{equation}

\begin{lemma} \label{lub2}
Let $\mcf$ be a family of subsets of $[n]$. Then $|\mcf|\le\lambda(\mcf)\binom{n}{\lfloor\frac{n}{2}\rfloor}$.
\end{lemma}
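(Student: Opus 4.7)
The plan is to prove this by a direct term-by-term comparison in the definition of $\lambda(\mcf)$. The key observation is that $\binom{n}{\lfloor n/2 \rfloor}$ is the maximum of the binomial coefficients $\binom{n}{k}$ over all $0 \le k \le n$, so every term in the sum defining $\lambda(\mcf)$ can be bounded below.

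First, I would note that for any $F \subseteq [n]$, we have $\binom{n}{|F|} \le \binom{n}{\lfloor n/2 \rfloor}$, and therefore, taking reciprocals,
$$\binom{n}{|F|}^{-1} \ge \binom{n}{\lfloor n/2 \rfloor}^{-1}.$$
Summing this inequality over every $F \in \mcf$ yields
$$\lambda(\mcf) = \sum_{F\in\mcf} \binom{n}{|F|}^{-1} \ge \sum_{F\in\mcf} \binom{n}{\lfloor n/2 \rfloor}^{-1} = \frac{|\mcf|}{\binom{n}{\lfloor n/2 \rfloor}}.$$
Multiplying through by $\binom{n}{\lfloor n/2 \rfloor}$ gives the desired bound $|\mcf| \le \lambda(\mcf)\binom{n}{\lfloor n/2 \rfloor}$.

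There is essentially no obstacle here: the lemma is a one-line monotonicity argument on the entries of $\lambda(\mcf)$, not a genuine use of the chain-counting identity (\ref{lubeq}). I would not bother invoking the averaging interpretation in the proof itself, since the inequality is purely algebraic; the chain interpretation from the preceding paragraph is what makes $\lambda(\mcf)$ a useful quantity to bound in later arguments (where one can typically prove $\lambda(\mcf) \le C$ for some constant depending on the forbidden configuration), but it is not needed to derive the present statement.
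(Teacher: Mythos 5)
Your proof is correct and is essentially identical to the paper's: both bound each term $\binom{n}{|F|}^{-1}$ from below by $\binom{n}{\lfloor n/2\rfloor}^{-1}$ using the maximality of the central binomial coefficient and sum over $\mcf$. Nothing further is needed.
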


\begin{proof}
$$\lambda(\mcf)=\sum_{F\in\mcf} \binom{n}{|F|}^{-1}\ge \sum_{F\in\mcf} \binom{n}{\lfloor\frac{n}{2}\rfloor}^{-1}=|\mcf|\binom{n}{\lfloor\frac{n}{2}\rfloor}^{-1}.$$
\end{proof}

\begin{lemma} \label{lub}
Let $\mcf$ be a family of subsets of $[n]$. Assume that $\lambda(\mcf)=x+y$, where $x\in\mathbb{N}$ and $y$ is a non-negative real number. Then
$$|\mcf|\le\Sigma(n,x)+y\binom{n}{\lceil\frac{n+x}{2}\rceil}.$$
\end{lemma}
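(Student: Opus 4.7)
The plan is to reduce the statement to a linear optimization over the size distribution of $\mcf$. For $0\le i\le n$, let $a_i=|\{F\in\mcf:|F|=i\}|$, so that $|\mcf|=\sum_i a_i$ and $\lambda(\mcf)=\sum_i a_i\binom{n}{i}^{-1}$, subject to $0\le a_i\le\binom{n}{i}$. Setting $b_i=a_i\binom{n}{i}^{-1}$, the task becomes: show that the maximum of $\sum_i b_i\binom{n}{i}$, taken over $b_i\in[0,1]$ with $\sum_i b_i=x+y$, is at most $\Sigma(n,x)+y\binom{n}{\lceil(n+x)/2\rceil}$.

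The key step is an exchange (rearrangement) argument. If at a maximizer there were indices $i,j$ with $\binom{n}{i}>\binom{n}{j}$ while $b_i<1$ and $b_j>0$, then moving a small amount of mass from $b_j$ to $b_i$ preserves both the equality and the box constraints but strictly increases the objective, a contradiction. Hence an optimum pours mass onto the indices of largest binomial coefficient first, completely filling each one before using the next. Assuming $0\le y<1$, this forces the optimum to set $b_i=1$ on the $x$ indices with the largest values of $\binom{n}{i}$ and $b_{i^\star}=y$ on the next such index.

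To recognize the value of this greedy optimum as the right-hand side of the lemma, one uses the unimodality and symmetry of $\binom{n}{i}$ around $n/2$: the $x$ indices of largest binomial coefficient fill exactly the interval $[\lceil(n-x)/2\rceil,\,\lceil(n+x)/2\rceil-1]$, whose sum of binomial coefficients is $\Sigma(n,x)$ by definition, and the next index may be taken to be $\lceil(n+x)/2\rceil$ (it is tied by symmetry with $\lceil(n-x)/2\rceil-1$). The main obstacle is the parity bookkeeping that makes these interval identifications work uniformly in $n$ and $x$, but this is routine.

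Finally, the statement permits $y\ge 1$, whereas the greedy analysis above assumes $y<1$. In that regime the claimed bound is merely weaker than the one obtained by rewriting $\lambda(\mcf)=(x+1)+(y-1)$: a short computation using $\Sigma(n,x+1)-\Sigma(n,x)=\binom{n}{\lceil(n+x)/2\rceil}$ together with $\binom{n}{\lceil(n+x)/2\rceil}\ge\binom{n}{\lceil(n+x+1)/2\rceil}$ (again by unimodality) shows that the new bound is no larger. Iterating if necessary reduces to $y\in[0,1)$, so the inequality of the lemma holds in full generality.
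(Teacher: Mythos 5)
Your proof is correct, and it rests on the same extremal principle as the paper's proof --- that for a prescribed Lubell mass the cardinality is maximized by pushing all the mass onto the middle levels --- but you establish and exploit that principle differently enough to be worth comparing. The paper argues by contradiction: it asserts without proof that, for fixed cardinality, $\lambda$ is minimized by taking sizes as close to $n/2$ as possible, selects the $\Sigma(n,x)$ members of $\mcf$ nearest the middle (these carry Lubell mass at least $x$), and notes that every remaining member lies outside the middle $x$ levels and hence contributes at least $\binom{n}{\lceil\frac{n+x}{2}\rceil}^{-1}$, so a family exceeding the bound would have $\lambda(\mcf)>x+y$. You instead pass to the fractional relaxation over level occupancies $b_i\in[0,1]$ with $\sum_i b_i=x+y$ and prove the compression principle honestly via an exchange argument, identifying the optimum as the greedy filling; this makes rigorous exactly the step the paper leaves as a bare assertion, at the cost of a slightly heavier setup. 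The price of the greedy formulation is that it directly applies only when $y<1$; your reduction for $y\ge 1$, using $\Sigma(n,x+1)-\Sigma(n,x)=\binom{n}{\lceil\frac{n+x}{2}\rceil}$ and unimodality, is correct, whereas the paper's counting handles all $y\ge 0$ in one stroke. The two parity facts you flag as routine --- that the $x$ largest binomial coefficients sum to $\Sigma(n,x)$ over the stated interval, and that the $(x+1)$-st largest equals $\binom{n}{\lceil\frac{n+x}{2}\rceil}$ --- do check out in all parity cases, so there is no gap.
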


\begin{proof}
For a fixed $|\mcg|$, the value of $\lambda(\mcg)$ is minimal when the sizes of the sets are as close to $\frac{n}{2}$ as possible. Assume that $|\mcf|>\Sigma(n,k)+r\binom{n}{\lceil\frac{n+k}{2}\rceil}$. Select $\Sigma(n,k)$ sets from $\mcf$ such that their sizes are as close to $\frac{n}{2}$ as possible. Then the Lubell function corresponding to their family is at most $k$. The sizes of all remaining sets are at least $\lceil\frac{n+k}{2}\rceil$ or at most $\lfloor\frac{n-k}{2}\rfloor$. Therefore the Lubell function corresponding to their family is at least $(|\mcf|-\Sigma(n,k))\binom{n}{\lceil\frac{n+k}{2}\rceil}^{-1}>
r\binom{n}{\lceil\frac{n+k}{2}\rceil}\binom{n}{\lceil\frac{n+k}{2}\rceil}^{-1}=r$. So $\lambda(\mcf)>k+r$, a contradiction.
\end{proof}

\section{Results}
In this section we prove upper bounds on the sizes of families avoiding certain configurations of inclusion and size restrictions. The original versions of these problems (having only inclusion restrictions) are shown before each problem.

The following simple inequalities will be be used in several proofs in this section, so they are proved separately here.

\begin{notation}
For $k\ge 2$, let $q(k)=\displaystyle\sum_{i=1}^{k-1} \binom{k}{i}^{-1}$.
\end{notation}

\begin{lemma}\label{qlemma}
\begin{enumerate}[i)]
\item $q(k)<\frac{4}{k}$ holds for all $k\ge 2$.
\item $q(k)\le \frac{2}{3}$ holds for all $k\ge 2$, with equality at $k=3$ and $k=4$.
\item If $a,b\ge 2$ and $a+b\ge 13$, then $q(a)+q(b)\le 1$.
\end{enumerate}
\end{lemma}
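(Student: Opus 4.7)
My plan is to prove the three parts in order, with parts (i) and (ii) being brief and part (iii) requiring a case analysis.

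For part (i), I would split $q(k)$ into the two extreme terms (at $i=1$ and $i=k-1$), each equal to $1/k$, and the $k-3$ middle terms for $2\le i\le k-2$. Since the binomial coefficient is minimised at the boundary of the middle range, $\binom{k}{i}\ge\binom{k}{2}=k(k-1)/2$ for $2\le i\le k-2$, which bounds the middle contribution by $\frac{2(k-3)}{k(k-1)}$. Adding gives $q(k)\le\frac{4(k-2)}{k(k-1)}$, and the desired $\frac{4(k-2)}{k(k-1)}<\frac{4}{k}$ reduces to $k-2<k-1$. For part (ii), I would verify $q(2)=1/2$, $q(3)=q(4)=2/3$, and $q(5)=3/5$ by direct computation, which gives both the bound and the claimed equality cases; for $k\ge 6$, part (i) yields $q(k)<4/k\le 2/3$.

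For part (iii), I would assume $a\le b$ without loss of generality, so that $b\ge\lceil 13/2\rceil=7$. When $a\ge 7$, a direct evaluation of $q(7)$ (which is less than $1/2$) together with $q(k)<4/k\le 1/2$ for $k\ge 8$ from part (i) yields $q(a)+q(b)<1$ immediately. The delicate regime is $a\in\{2,3,4,5,6\}$, where $q(a)$ can be as large as $2/3$. For each such small $a$ the hypothesis $a+b\ge 13$ forces $b\ge 13-a$, and I would evaluate $q$ at this minimum value of $b$ and then combine with part (i) for larger $b$. The tightest sub-cases are $(a,b)=(4,9)$ and $(3,10)$, both requiring the explicit bounds $q(9)\le 1/3$ and $q(10)\le 1/3$; the other pairings with $a\in\{2,5,6\}$ are looser and follow from part (i) alone once the minimum $b$ has been handled.

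The main obstacle is that the bound $q(k)<4/k$ from part (i) is not tight enough for $k$ near $9$ (it only gives $q(9)<4/9$ and $q(10)<2/5$, both exceeding $1/3$), so a handful of explicit evaluations near the boundary $a+b=13$ are unavoidable. Once $q(7),q(8),q(9),q(10)$ are computed directly, the remaining cases of part (iii) combine cleanly with the general bounds from parts (i) and (ii).
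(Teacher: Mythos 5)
Parts i) and ii) of your proposal coincide with the paper's own proof: the same decomposition of $q(k)$ into the two terms $\frac1k$ plus $k-3$ middle terms each at most $\binom{k}{2}^{-1}$, and the same explicit check for small $k$ combined with part i) for $k\ge 6$. (A cosmetic point only: the decomposition presupposes $k\ge 3$; for $k=2$ the claimed intermediate bound $\frac{4(k-2)}{k(k-1)}$ is $0<q(2)$, so just note $q(2)=\frac12<2$ directly -- the paper's own write-up has the same blemish.) For part iii) you diverge from the paper: the paper verifies the range $13\le a+b\le 23$ by computer and then, for $a+b\ge 24$ and $a\le b$, uses $q(b)\le \frac4b\le\frac13$ together with $q(a)\le\frac23$; you instead do a full by-hand case analysis on $a\le b$ with $b\ge 7$. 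That is a legitimate and more self-contained route, at the cost of a handful of explicit evaluations of $q$ near the boundary.

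There is, however, one concrete hole in your case analysis as stated. For $a\in\{3,4\}$ you have $q(a)=\frac23$, so you need $q(b)\le\frac13$ for \emph{every} $b\ge 13-a$, in particular for $b=11$; but part i) only gives $q(11)<\frac{4}{11}>\frac13$, and $q(11)$ is not among the values $q(7),q(8),q(9),q(10)$ you propose to compute. So the pairs $(a,b)=(3,11)$ and $(4,11)$ are not covered by your stated toolkit. The fix is immediate: also evaluate $q(11)=2\left(\frac1{11}+\frac1{55}+\frac1{165}+\frac1{330}+\frac1{462}\right)=\frac{278}{1155}<\frac13$; from $b\ge 12$ onward part i) does suffice, since $\frac4{12}=\frac13$ and the inequality in i) is strict. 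A related but harmless imprecision: for $a=5$ the case $b=9$, and for $a=6$ the case $b=8$, do not follow ``from part i) alone'' (indeed $\frac49>\frac25$ and $\frac48>1-q(6)=\frac{29}{60}$), but they are covered by the explicit values $q(9)=\frac{20}{63}$ and $q(8)=\frac{13}{35}$ that you already compute. With $q(11)$ added to your list, every pair $a\le b$, $a+b\ge 13$ is covered and your proof of part iii) is complete.
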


\begin{proof}
\begin{enumerate}[i)]
\item $$q(k)=\sum_{i=1}^{k-1} \binom{k}{i}^{-1}\le \frac{1}{k}+\frac{1}{k}+(k-3)\frac{2}{k(k-1)}<\frac{4}{k}.$$

\item The statement can be checked manually for $2\le k\le 5$, and follows from part i) for $k\ge 6$.
\item The statement can be checked easily with a computer for $13\le a+b\le 23$. Assume that $a+b\ge 24$, and $a\le b$. Then  part i) implies $q(b)\le\frac{4}{b}\le\frac{4}{12}=\frac{1}{3}$ and part ii) implies $q(a)\le \frac{2}{3}$.
\end{enumerate}
\end{proof}

The following classic theorem provides an upper bound to the size of families avoiding two 3-element posets. Let $V$ denote the 3-element poset with the relations $A<B,C$ and let $\Lambda$ denote the 3-element poset with the relations $B,C<A$.

\begin{theorem}({\bf Katona-Tarj\'an} \cite{tarjan})\label{tarjanthm}
$$La(n,V,\Lambda)=2\binom{n-1}{\lfloor\frac{n-1}{2}\rfloor}.$$
\end{theorem}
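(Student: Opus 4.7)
The plan is first to pin down the structure of families avoiding both $V$ and $\Lambda$, and then to exploit that structure via Bollob\'as's two-families inequality.

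For the structural step: if $F \in \mcf$ had two distinct strict supersets in $\mcf$, those three sets would embed $V$; dually, two strict subsets would embed $\Lambda$. Hence every $F \in \mcf$ has at most one strict superset and at most one strict subset inside $\mcf$, and no three sets of $\mcf$ form a chain (otherwise the minimum has two strict supersets in $\mcf$). A short case analysis then shows $\mcf$ partitions into $p$ comparable pairs $\{F_i, F'_i\}$ with $F_i \subsetneq F'_i$ plus $s$ singletons $\{G_j\}$, where any two sets lying in distinct components are incomparable; in particular $|\mcf| = 2p + s$.

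I would then apply Bollob\'as's two-families inequality to the system $(A_i, B_i) := (F_i, [n]\setminus F'_i)$ for $i = 1, \dots, p$ and $(A_{p+j}, B_{p+j}) := (G_j, [n]\setminus G_j)$ for $j = 1, \dots, s$. One has $A_i \cap B_i = \emptyset$ trivially. For $i \neq k$, an equality $A_i \cap B_k = \emptyset$ would place a bottom or singleton inside a top or singleton from a different component, which in each of the four relevant cases contradicts the uniqueness-of-strict-super/subset from the structural step. Bollob\'as then yields
$$\sum_{i=1}^p \binom{n-d_i}{|F_i|}^{-1} + \sum_{j=1}^s \binom{n}{|G_j|}^{-1} \le 1, \qquad d_i := |F'_i| - |F_i| \ge 1.$$
Since $n - d_i \le n-1$ and central binomial coefficients are non-decreasing in their superscript, the first sum is at least $p\binom{n-1}{\lfloor(n-1)/2\rfloor}^{-1}$; Pascal's identity $\binom{n}{\lfloor n/2\rfloor} \le 2\binom{n-1}{\lfloor(n-1)/2\rfloor}$ shows the second sum is at least $(s/2)\binom{n-1}{\lfloor(n-1)/2\rfloor}^{-1}$. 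Rearranging gives $|\mcf| = 2p + s \le 2\binom{n-1}{\lfloor(n-1)/2\rfloor}$. The matching lower bound is witnessed by the family consisting of all $\lceil n/2\rceil$-subsets of $[n]$ containing the element $1$ together with all $(\lceil n/2\rceil - 1)$-subsets avoiding $1$, naturally paired by adding or removing $1$.

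The main obstacle is recognising that the pair/singleton decomposition, combined with the incomparability between distinct components, encodes exactly the Bollob\'as non-intersection condition; once that correspondence is spotted, verifying the four comparability cases is mechanical and the final cardinality estimate is routine arithmetic on central binomial coefficients.
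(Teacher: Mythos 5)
Your proposal is correct, and it is a genuinely different route from what the paper does: the paper only cites Katona--Tarj\'an for this statement (supplying just the extremal construction), and the proof it actually writes out is for the stronger, size-restricted Theorem~\ref{tarjanthm2}, via a chain-counting argument that assigns to each $F\in\mcf$ a collection $\alpha(F)$ of at least $(m!)^2$ chains (with a delicate ``red set'' repair step for the sets of size $m-1$). Your argument instead extracts the structure of a $\{V,\Lambda\}$-free family --- comparability graph of maximum degree one, hence a disjoint union of $p$ comparable pairs and $s$ singletons with cross-component incomparability --- and feeds the pairs $(F_i,[n]\setminus F_i')$ and $(G_j,[n]\setminus G_j)$ into Bollob\'as's set-pair inequality; the verification of the cross-intersection condition and the estimates $\binom{n-d_i}{|F_i|}\le\binom{n-1}{\lfloor (n-1)/2\rfloor}$ and $\binom{n}{|G_j|}\le 2\binom{n-1}{\lfloor (n-1)/2\rfloor}$ are all sound, the degenerate members $\emptyset$ or $[n]$ being harmless since their presence forces $|\mcf|\le 2$. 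What each approach buys: yours is a short, self-contained proof of the exact bound for the classical (inclusion-only) problem; the paper's heavier chain-partition machinery is needed because under the size-restricted hypothesis of Theorem~\ref{tarjanthm2} a set may have many strict supersets (of distinct sizes), so the pair/singleton decomposition --- and with it the Bollob\'as cross-intersection condition --- breaks down, and your argument would not extend to that strengthening.
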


The following construction shows that there is a family of size $2\binom{n-1}{\lfloor\frac{n-1}{2}\rfloor}$ that avoids both $V$ and $\Lambda$:
$$\mcf=\left\{F\in[n]~\big|~|F|=\left\lfloor\frac{n}{2}\right\rfloor,~1\not\in F\right\}\cup
\left\{F\in[n]~\big|~|F|=\left\lceil\frac{n}{2}\right\rceil, 1\in F\right\}.$$

We prove that the same bound applies if the forbidden configuration includes two of the sets to having the same size. (The construction obviously works in this case too, so the bound is best possible.)

\begin{theorem}\label{tarjanthm2}
Let $\mcf$ be a family of subsets of $[n]$, where $n\ge 3$. Assume that there are no 3 different subsets in $\mcf$ such that
\begin{enumerate}[a)]
\item $A\subset B$, $A\subset C$ and $|B|=|C|$ or
\item $B\subset A$, $C\subset A$ and $|B|=|C|$.
\end{enumerate}
Then $|\mcf|\le 2\binom{n-1}{\lfloor\frac{n-1}{2}\rfloor}$.
\end{theorem}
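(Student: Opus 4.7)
The approach is a chain-counting (Lubell) argument, exploiting conditions (a) and (b) through a second moment. For each saturated chain $c$ of $[n]$ let $X(c)=|c\cap \mcf|$; the identity (\ref{lubeq}) gives $\sum_c X(c)=n!\lambda(\mcf)$, and Lemma \ref{lub} converts a Lubell-type bound into a bound on $|\mcf|$.

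The second-moment sum
\begin{equation*}
\sum_c \binom{X(c)}{2} \;=\; \sum_{F\subsetneq G,\,F,G\in\mcf} |F|!\,(|G|-|F|)!\,(n-|G|)!
\end{equation*}
is the key object. For fixed $F\in\mcf$, condition (a) allows at most one $G\in\mcf$ of each size $k>|F|$ containing $F$, so
\begin{equation*}
\sum_{G\supsetneq F,\,G\in\mcf} \frac{(|G|-|F|)!\,(n-|G|)!}{(n-|F|)!} \;\le\; \sum_{j=1}^{n-|F|} \frac{1}{\binom{n-|F|}{j}} \;=\; 1+q(n-|F|).
\end{equation*}
Condition (b) gives the symmetric bound $1+q(|G|)$ on the contribution of each $G$. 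Averaging the two estimates and combining with a second-moment (Cauchy--Schwarz) inequality yields a bound on $\lambda(\mcf)$, which is then fed into Lemma \ref{lub}. The arithmetic estimates in Lemma \ref{qlemma} control the resulting $q$-function sums.

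The main obstacle I foresee is the sharpness of the constant. The Lubell function alone can exceed $2$: for example, a full flag $\emptyset\subset\{1\}\subset\cdots\subset [n]$ is a legal $\mcf$ with $\lambda=2+q(n)$, whereas the extremal construction has $\lambda$ close to $1$. Hence the argument must trade off the size of $\lambda(\mcf)$ against the concentration of $\mcf$ near the middle layers, which is precisely what Lemma \ref{lub} is designed to accomplish. Matching the exact constant $2\binom{n-1}{\lfloor(n-1)/2\rfloor}$ will likely require a case split: handle separately the cases when $\emptyset$ or $[n]$ lies in $\mcf$ and when $\mcf$ contains near-extreme sets of sizes $1$ or $n-1$, and invoke parts i)--iii) of Lemma \ref{qlemma} in the regime each is sharpest. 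Small values of $n$ may also require direct verification, since Lemma \ref{qlemma} iii) only applies once $n\ge 13$.
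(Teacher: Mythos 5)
Your plan has a genuine gap, and it is one you half-notice yourself: any argument of the form ``bound $\lambda(\mcf)$, then apply Lemma \ref{lub}'' cannot reach the stated bound. For even $n$ the target $2\binom{n-1}{\lfloor\frac{n-1}{2}\rfloor}$ equals $\binom{n}{n/2}$ exactly, so via Lemma \ref{lub} you would essentially need $\lambda(\mcf)\le 1$; but a single maximal chain $\emptyset\subset\{1\}\subset\dots\subset[n]$ satisfies both hypotheses (no two of its members have equal size) and has $\lambda=2+q(n)>2$. With only $\lambda(\mcf)\le 2+q(n)$ available, Lemma \ref{lub} gives roughly $\Sigma(n,2)\approx 2\binom{n}{n/2}$, i.e.\ twice the claimed bound. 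Your per-set estimates (at most one superset of each size by a), at most one subset of each size by b), giving $1+q(n-|F|)$ and $1+q(|F|)$) are exactly the fork-type bounds used in Lemma \ref{sizeforklemma}, and they can only yield $\bigl(1+O(1/n)\bigr)\binom{n}{\lfloor n/2\rfloor}$-type asymptotics, not the exact inequality for every $n\ge 3$. The second-moment identity $\sum_c\binom{X(c)}{2}=\sum_{F\subsetneq G}|F|!(|G|-|F|)!(n-|G|)!$ is correct, but you never specify how Cauchy--Schwarz would convert it, together with the first moment, into a bound on $|\mcf|$ rather than on $\lambda(\mcf)$; averaged chain information alone does not distinguish a family of size $\binom{n}{n/2}$ concentrated on the middle level from one that is a constant factor larger, so some per-chain or per-set bookkeeping with the exact hypotheses is unavoidable.

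For comparison, the paper does not use the Lubell function here at all. It first reduces odd $n$ to even $n=2m$ by splitting $\mcf$ according to whether $1\in F$. Then it assigns to each $F\in\mcf$ the collection $\alpha(F)$ of full chains through $F$ in which $F$ is the member of $\mcf$ whose size is closest to $m+\frac13$; these collections are pairwise disjoint, and one shows $|\alpha(F)|\ge (m!)^2$ for every $F$, using that conditions a) and b) allow at most one $\mcf$-set of each size above or below $F$ on chains through $F$. The only problematic case is $|F|=m-1$ with both an $m$-set and an $(m+1)$-set of $\mcf$ above it; there the count is repaired by adding chains that avoid $\mcf$ entirely and pass through carefully chosen ``red'' $m$-sets, with Lemma \ref{qlemma} ii) guaranteeing enough such chains. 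Disjointness then gives $|\mcf|\le n!/(m!)^2=\binom{n}{m}$. If you want to salvage your approach, you would have to replace the Lubell/Lemma \ref{lub} step by some such exact chain-partition or chain-assignment argument; as written, the proposal cannot produce the sharp constant.
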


\begin{proof}
The statement of the theorem can be checked easily for $n=3$. From now on, we will assume that $n\ge 4$.

It is enough to prove the theorem for even values of $n$, as it follows from $n=2m$ to $n=2m+1$. To see this, assume that we already proved it for even values, and consider an odd $n$. Let $\mcf$ be a sets of subsets of $[n]$, satisfying the conditions of the theorem. Then let
$$\mcf^- = \{F ~|~ F\in\mcf,~1\not\in F\},$$
$$\mcf^+ = \{F\backslash\{1\} ~|~F\in\mcf,~ 1\in F\}.$$

Then $\mcf^-$ and $\mcf^+$ are both families of subsets of $[n-1]$, satisfying the conditions of the theorem. Since $n-1$ is even, their sizes are at most $2\binom{n-2}{\lfloor\frac{n-2}{2}\rfloor}=\binom{n-1}{\frac{n-1}{2}}$. Therefore
$$|\mcf|=|\mcf^-|+|\mcf^+|\le 2\binom{n-1}{\frac{n-1}{2}}.$$

From now on, we will assume that $n$ is even, and use the notation $m=\frac{n}{2}$. Note that for even $n$, $2\binom{n-1}{\lfloor\frac{n-1}{2}\rfloor}=\binom{n}{m}$.

We can assume that $\emptyset,[n]\not\in\mcf$, since $\emptyset\in\mcf$ or $[n]\in\mcf$ would imply that all subsets in $\mcf$ have different size, therefore $|\mcf|\le n+1\le \binom{n}{m}$.

The main idea of the proof is the following. For all sets $F\in\mcf$, we will create a collection of chains called $\alpha(F)$. These collections will be pairwise disjoint, so $F\not=F'\Rightarrow \alpha(F)\cap\alpha(F')=\emptyset$. These collections will be defined such that $|\alpha(F)|\ge (m!)^2$ for all $F\in\mcf$. This will imply $|\mcf|\le \frac{n!}{(m!)^2}=\binom{n}{m}$.

For all $F\in\mcf$, let $\alpha(F)$ consist of all chains that contain $F$, and among the elements of $\mcf$ in the chain, $F$'s size is the closest to $m+\frac{1}{3}$. This way, all chains that contain at least one element of $\mcf$ are added to exactly one of the collections $\alpha(F)$.

Now we give a lower bound to $|\alpha(F)|$.

If $|F|=m$, then all chains passing through $F$ will be added to $\alpha(F)$, therefore $|\alpha(F)|=(m!)^2$.

Assume that $m<|F|<n$. There are $|F|!(n-|F|)!$ chains passing through $F$. All of them are in $\alpha(F)$ except for those that contain a set $G$ satisfying $n-|F|+1\le|G|\le |F|-1$ and $G\subset F$. The conditions of the theorem imply that there are at most $2|F|-n-1$ such sets (one for every possible size). The number of chains passing through both $G$ and $F$ is
$$|G|!(|F|-|G|)!(n-|F|)!\le (|F|-1)!(n-|F|)!.$$
Therefore
$$|\alpha(F)|\ge|F|!(n-|F|)!-(2|F|-n-1)(|F|-1)!(n-|F|)!=$$
$$(n+1-|F|)(|F-1)!(n-|F|)!=(|F|-1)!(n+1-|F|)!\ge (m!)^2.$$

Now assume that $1\le|F|<m$. There are $|F|!(n-|F|)!$ chains passing through $F$. All of them are in $\alpha(F)$ except for those that contain a set $G$ satisfying $|F|+1\le|G|\le n-|F|$ and $F\subset G$. The conditions of the theorem imply that there are at most $n-2|F|$ such sets (one for every possible size). The number of chains passing through both $F$ and $G$ is
$$|F|!(|G|-|F|)!(n-|G|)!.$$
Therefore
\begin{equation}\label{alphaineq}\alpha(F)\ge|F|!(n-|F|)!-\sum_{i=|F|+1}^{n-|F|} |F|!(i-|F|)!(n-i)!.\end{equation}

Assume that $1\le|F|\le m-2$. Consider the sum $\displaystyle\sum_{i=|F|+1}^{n-|F|} (i-|F|)!(n-i)!$. It has $n-2|F|$ summands, all of which are at most $(n-|F|-1)!$. It is also easy to check that $2!(n-|F|-2)!+3!(n-|F|-3)!\le (n-|F|-1)!$. Therefore
$$\sum_{i=|F|+1}^{n-|F|} (i-|F|)!(n-i)!\le (n-2|F|-1)(n-|F|-1)!.$$

It implies by (\ref{alphaineq}) that for $1\le|F|\le m-2$ we have
$$\alpha(F)\ge |F|!\big((n-|F|)!-(n-2|F|-1)(n-|F|-1)!\big)=(|F|+1)!(n-|F|-1)!\ge (m!)^2.$$

So far, we proved that $\alpha(F)\ge (m!)^2$ if $|F|\not= m-1$. Now, assume that $|F|=m-1$. There are $(m-1)!(m+1)!$ chains passing through $F$. All of them are in $\alpha(F)$, with the exception of those that contain a set of $\mcf$ whose size is $m$ or $m+1$. Note that there can be at most 1 set of size $m$ and 1 set of size $m+1$ in $\mcf$ that contains $F$.

If $|G|=m$, $G\in\mcf$ and $F\subset G$, then there are $(m-1)!m!$ chains containing both $F$ and $G$. These chains will not be in $\alpha(F)$. Similarly, if $|H|=m+1$, $H\in\mcf$ and $F\subset H$, then there are $2((m-1)!)^2$ chains containing both $F$ and $G$. These chains will also not be in $\alpha(F)$.

It is easy to see that $|\alpha(F)|\ge (m!)^2$ holds, unless there are sets from $\mcf$ of size both $m$ and $m+1$ containing $F$. If there would be only one of them, then we would have
$$|\alpha(F)|\ge (m-1)!(m+1)!-\max\left((m-1)!m!,~ 2((m-1)!)^2\right)=(m-1)!(m+1)!-(m-1)!m!=(m!)^2.$$

To complete the proof, we need to add some additional chains to the collections corresponding to these elements, so they get at least $(m!)^2$ chains too. Since we already used all chains passing through an element from $\mcf$, we have to use those chains that have no common element with $\mcf$.

Let $F_1, F_2,\dots ,F_p$ denote the sets of size $m-1$ in $\mcf$ that got less than $(m!)^2$ chains assigned to them. For all $F_i$, there are two sets $G_i$ and $H_i$ such that $F_i\subset G_i$, $F_i\subset H_i$, $|G_i|=m$ and $|H_i|=m+1$. The conditions of the theorem imply that if $i\not= j$, then $H_i\not=H_j$ and $F_i\not\subset H_j$. For a fixed $i$, there are two subsets of size $m$ that contain $F_i$ and are contained in $H_i$. If these two sets are different from $G_i$, color both of them red. If one of them is $G_i$, color the other one red. We will call this/these set(s) the red set(s) corresponding to $F_i$. Note that the conditions of the theorem imply that the red sets corresponding to different indices $i$ are different sets.

There are no two subsets of the same size in $\mcf$ that contain a red set, as they would form a forbidden configuration with the corresponding $F_i$. Similarly there are no two subsets of the same size in $\mcf$ that are contained in a red set, as they would form a forbidden configuration with corresponding $H_i$.

Let $X$ be a fixed red set such that $F_i\subset X\subset H_i$. The total number of chains passing through $X$ is $(m!)^2$. If $T\subset X$, then the number of chains between $\emptyset$ and $X$, passing through $T$ is $|T|!(m-|T|)$. Similarly, If $X\subset S$, then the number of chains between $X$ and $[n]$, passing through $S$ is $(|S|-m)!(n-|S|)!$. Therefore the total number of chains passing through $X$ and avoiding $\mcf$ is at least
$$\left(m!-\sum_{i=1}^{m-1} i!(m-i)!\right)^2=
(m!)^2\left(1-\sum_{i=1}^{m-1}\frac{i!(m-i)!}{m!}\right)^2=
(m!\cdot(1-q(m)))^2\ge ((m-1)!)^2.$$
(In the last step, we used that $m(1-q(m))\ge 1$ holds for for $m\ge 2$. It follows easily from Lemma \ref{qlemma} ii).) Let us add the chains passing through a red set corresponding to $F_i$ and avoiding $\mcf$ to $\alpha(F_i)$.

If $G_i\not\subset H_i$, then the original size of $\alpha(F_i)$ can be calculated by taking the number of all chains that are passing through $F_i$ and subtracting those that are passing through $G_i$ or $H_i$ as well. It gives us
$(m-1)!(m+1)!-(m-1)!m!-2((m-1)!)^2=(m!)^2-2((m-1)!)^2$.
In this case, there are two red sets corresponding to $F_i$, so at least $2((m-1)!)^2$ chains are added to $\alpha(F_i)$, making the total number at least $(m!)^2$.

If $G_i\subset H_i$, then the original size of $\alpha(F_i)$ can be calculated by taking the number of all chains that are passing through $F_i$, subtracting those that are passing through both $F_i$ and $H_i$, and finally subtracting those that are passing through both $F_i$ and $G_i$, but not $H_i$. It gives us
$(m-1)!(m+1)!-2((m-1)!)^2-(m-1)!(m-1)(m-1)!=(m!)^2-((m-1)!)^2$.
In this case, there is one red set corresponding to $F_i$, so at least $((m-1)!)^2$ chains are added to $\alpha(F_i)$, making the total number at least $(m!)^2$.

It completes the proof, since now $|\alpha(F)|\ge(m!)^2$ holds for all $F\in\mcf$. Since there are a total of $n!$ chains, it implies $|\mcf|\le \frac{n!}{(m!)^2}=\binom{n}{m}$.
\end{proof}

Note that there is another theorem strongly related to Theorem \ref{tarjanthm}.

\begin{theorem}({\bf Kleitman} \cite{kleitman})
Let $\mcf$ be a family of subsets of $[n]$, where $n\ge 20$. Assume that there are no 3 different subsets in $\mcf$ such that $A=B\cap C$ or $A=B\cup C$.
Then $|\mcf|\le \binom{n}{\frac{n}{2}}$ if $n$ is even, and $|\mcf|\le 2\binom{n-1}{\frac{n-1}{2}}+2$ if $n$ is odd.
\end{theorem}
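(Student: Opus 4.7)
The plan is to prove the even case by chain counting and deduce the odd case by the same splitting trick as in Theorem \ref{tarjanthm2}. For $n$ odd, set $\mcf^- = \{F \in \mcf : 1 \notin F\}$ and $\mcf^+ = \{F \setminus \{1\} : F \in \mcf,\ 1 \in F\}$. Both are Kleitman-free families on $[n-1]$: inheritance is immediate for $\mcf^-$, and for $\mcf^+$ any potential violation $A' = B' \cap C'$ (or $A' = B' \cup C'$) lifts to $A = B \cap C$ (resp.\ $\cup$) in $\mcf$ by adding $\{1\}$ back to each set. Since $n-1$ is even, the even case gives $|\mcf^-|, |\mcf^+| \le \binom{n-1}{(n-1)/2}$, and summing yields $|\mcf| \le 2\binom{n-1}{(n-1)/2}$, which implies the stated odd-case bound.

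For even $n = 2m$, I would follow the chain-counting strategy of Theorem \ref{tarjanthm2}. For each $F \in \mcf$, let $\alpha(F)$ be the set of chains $\mcc$ containing $F$ such that $F$ is the element of $\mcf \cap \mcc$ whose size is closest to $m$ (with a fixed tiebreaking rule). The $\alpha(F)$ are pairwise disjoint by construction, and proving $|\alpha(F)| \ge (m!)^2$ for every $F \in \mcf$ yields $|\mcf| \le n!/(m!)^2 = \binom{n}{m}$. For $|F| = m$ this is automatic, since no other element of $\mcf \cap \mcc$ can be closer in size to $m$, so all $(m!)^2$ chains through $F$ lie in $\alpha(F)$.

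The main obstacle is the case $|F| \ne m$. In Theorem \ref{tarjanthm2} the size-restricted $V$/$\Lambda$-condition forced at most one $\mcf$-element per size on any chain through $F$, which made the local count clean. The Kleitman condition is qualitatively weaker: it only implies that no two proper $\mcf$-subsets $G_1, G_2$ of $F$ have $G_1 \cup G_2 = F$, and dually for two proper supersets intersecting to $F$. This is a global constraint rather than a per-chain one, so it does not directly bound the number of $\mcf$-elements on a single chain through $F$. I expect the argument to require a double-counting across chains through $F$ that converts the global $\cup$- and $\cap$-conditions into an effective per-$F$ lower bound on $|\alpha(F)|$, likely supplemented by a secondary chain injection analogous to the ``red sets'' step at the end of the proof of Theorem \ref{tarjanthm2}. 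Numerical inequalities in the spirit of Lemma \ref{qlemma} should close the argument, and the hypothesis $n \ge 20$ would enter precisely through these estimates, which become delicate for small $n$.
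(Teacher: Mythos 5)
You should first note that the paper does not prove this statement at all: it is quoted, with attribution, from Kleitman's paper \cite{kleitman} as background to Theorem \ref{tarjanthm2}, so there is no in-paper proof to compare against, and your proposal has to stand on its own. Judged that way, it is incomplete. The odd-case reduction (splitting on the element $1$, exactly as in the even-to-odd steps of Theorems \ref{tarjanthm2} and \ref{jthm}) is correct, and would in fact give the stronger bound $2\binom{n-1}{(n-1)/2}$ for odd $n$ \emph{provided} the even case is known. But the even case is the entire content of the theorem, and for it you only verify the trivial subcase $|F|=m$ and then explicitly concede that you do not know how to handle $|F|\neq m$; ``I expect the argument to require a double-counting\dots likely supplemented by a secondary chain injection'' is a description of a hoped-for proof, not a proof.

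The gap is not a routine one that the machinery of Theorem \ref{tarjanthm2} can be expected to close. That proof runs on the fact that the size-restricted $V$/$\Lambda$ condition forces at most one member of $\mcf$ of each size above (respectively below) a given $F$, which is what makes the per-set estimate $|\alpha(F)|\ge (m!)^2$ provable. The Kleitman condition gives nothing of this kind: a complete maximal chain of $n+1$ sets contains no triple with $A=B\cap C$ or $A=B\cup C$, and for a fixed $F$ and a fixed $x\notin F$ the family $\{F\}\cup\{G:\ |G|=m,\ F\cup\{x\}\subseteq G\}$ is admissible, so a member of $\mcf$ can have arbitrarily many same-size supersets in $\mcf$ and single chains through $F$ can be saturated with members of $\mcf$. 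Consequently both the disjoint chain-collection count and any Lubell-function bound ($\lambda(\mcf)$ can be as large as $n+1$) fail in principle, and converting the global union/intersection constraints into an effective per-$F$ lower bound on $|\alpha(F)|$ is precisely the hard step that is missing; likewise the role of the hypothesis $n\ge 20$ cannot be located in estimates you have not produced. As it stands, the proposal proves only the (easy) odd-from-even implication of a theorem whose proof must be sought in Kleitman's cited paper, which uses a genuinely different and substantially more involved argument.
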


Now we move over to fork posets.

\begin{theorem}({\bf De Bonis-Katona} \cite{rfork})
Let $V_s$ denote the fork poset, that consists of $s$ unrelated elements and a $s+1$-th one that is smaller than all of the others. Then
$$La(n,V_s)\le\left(1+\frac{2(s-1)}{n}+O\left(\frac{1}{n^2}\right)\right)\binom{n}{\lfloor\frac{n}{2}\rfloor}.$$
\end{theorem}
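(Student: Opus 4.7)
The plan is to use the chain-counting (Lubell) method and reduce the theorem to showing $\lambda(\mcf) := \sum_{F\in\mcf}\binom{n}{|F|}^{-1} \le 1+\frac{2(s-1)}{n}+O(1/n^2)$, after which Lemma \ref{lub2} finishes. The starting observation is that a non-induced embedding of $V_s$ needs only $s$ distinct supersets of a common bottom inside $\mcf$; hence every $F\in\mcf$ has at most $s-1$ proper supersets in $\mcf$, and every maximal chain $c$ satisfies $X(c):=|c\cap\mcf|\le s$. This already gives $\lambda(\mcf)\le s$, so the task is to sharpen the leading constant from $s$ to $1+O(1/n)$.

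I would split $\mcf=\mathcal{M}\sqcup\mcf'$, where $\mathcal{M}$ consists of the sets of $\mcf$ with no proper superset in $\mcf$. Then $\mathcal{M}$ is an antichain and $\lambda(\mathcal{M})\le 1$. For each $F\in\mcf'$ with proper supersets $G_1,\dots,G_d\in\mcf$ ($d\le s-1$), the number of maximal chains through $F$ on which $F$ is \emph{not} the topmost $\mcf$-element is at most $|F|!(n-|F|)!\cdot\varepsilon(F)$ with $\varepsilon(F):=\sum_{j=1}^d\binom{n-|F|}{|G_j|-|F|}^{-1}$, because the upper half of such a chain is a uniform chain in $[F,[n]]$ and must pass through some $G_j$. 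Summing over $F\in\mcf'$, dividing by $n!$, and using $\mathrm{ave}_c X(c)-\Pr[X(c)\ge 1]=\mathrm{ave}_c(X(c)-1)^+$ yields
$$\lambda(\mcf)\le 1 + \sum_{F\in\mcf'}\binom{n}{|F|}^{-1}\varepsilon(F).$$

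For $F$ in a middle layer and with every $|G_j|<n$, the elementary inequality $\binom{n-|F|}{k}^{-1}\le 1/(n-|F|)$ for $1\le k\le n-|F|-1$ gives $\varepsilon(F)\le(s-1)/(n-|F|)\le\frac{2(s-1)}{n}(1+O(1/n))$. Substituting and using $\lambda(\mcf')\le\lambda(\mcf)$ produces $\lambda(\mcf)\le 1+\frac{2(s-1)}{n}\lambda(\mcf)+O(1/n^2)$, which rearranges to the claimed bound.

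The main obstacle is the contribution of two atypical pieces of $\mcf'$: sets $F$ with $|F|\gg n/2$, for which $1/(n-|F|)$ is not $O(1/n)$, and the case $[n]\in\mcf$ (equivalently, some $G_j=[n]$), in which $\varepsilon(F)$ acquires a term of size~$1$. I would dispose of the second case by induction on $s$: if $[n]\in\mcf$, then $\mcf\setminus\{[n]\}$ is $V_{s-1}$-free, and the inductive hypothesis already gives the required bound with room to spare. For the first case, one has to exploit that such $F$ carry Lubell weight $\binom{n}{|F|}^{-1}$ much larger than the typical scale $\binom{n}{\lfloor n/2\rfloor}^{-1}$, so only a small count of such $F$ fits into the budget $\lambda(\mcf)\le s$; converting this heuristic into a clean $O(1/n^2)$ absorption, possibly via binomial-sum estimates in the spirit of Lemma \ref{qlemma}, is the technical crux.
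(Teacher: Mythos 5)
Your chain-counting framework is sound as far as it goes: the observation that a $V_s$-free family has every member with at most $s-1$ proper supersets in $\mcf$, the bound $\lambda(\mcf)\le s$, and the inequality $\lambda(\mcf)\le 1+\sum_{F\in\mcf'}\binom{n}{|F|}^{-1}\varepsilon(F)$ with $\varepsilon(F)=\sum_j\binom{n-|F|}{|G_j|-|F|}^{-1}$ are all correct, and your treatment of $[n]\in\mcf$ is fine (one can even just delete $[n]$ at a cost of $1$). The genuine gap is exactly the point you defer as the ``technical crux'': the sets with $|F|$ far above $n/2$ cannot be absorbed into an $O(1/n^2)$ term, because the intermediate goal you reduce to, namely $\lambda(\mcf)\le 1+\frac{2(s-1)}{n}+O(1/n^2)$ followed by Lemma \ref{lub2}, is false. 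For $s\ge 3$ take $\mcf$ to consist of \emph{all} subsets of sizes $n-1$ and $n-2$: no set has $s$ proper supersets in $\mcf$, yet $\lambda(\mcf)=2$. Your own inequality is tight on this family (each $F$ of size $n-2$ has $\varepsilon(F)=1$, and the level contributes Lubell mass $1$), so within the Lubell framework the loss caused by large sets is of constant size, not $O(1/n^2)$, and the self-improving step $\lambda\le 1+\frac{2(s-1)}{n}\lambda+O(1/n^2)$ is only available when every member of $\mcf'$ has $n-|F|\ge\frac n2-o(n)$, which you cannot assume. A trivial levelwise absorption gives $\sum_k\frac{s-1}{n-k}\sim(s-1)\log n$, so the heuristic ``only a small count fits in the budget $\lambda\le s$'' does not close the gap.

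The standard repair is to truncate to the levels $\frac n2\pm k$ with $k=\Theta(\sqrt{n\log n})$, bound the discarded sets by Lemma \ref{logn}, and run your argument only on the middle part; this is exactly the partition-method proof the paper gives for its size-restricted analogue (Lemma \ref{sizeforklemma} and Theorem \ref{sizefork}). But that route inherently yields the weaker error term $O(\sqrt{\log n}/n^{3/2})$: making the tail count $O(n^{-2})\binom{n}{\lfloor n/2\rfloor}$ forces $k=\Omega(\sqrt{n\log n})$, and then sets of size near $\frac n2+k$ give a correction $\frac{s-1}{n/2-k}=\frac{2(s-1)}{n}+\Theta(\sqrt{\log n}\,n^{-3/2})$. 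Obtaining the stated $O(1/n^2)$ is precisely the extra content of De Bonis--Katona's argument, and it requires abandoning a pure bound on $\lambda(\mcf)$ in favour of bounding $|\mcf|$ (or a weighted chain count, in the spirit of the weights $w(F)=\binom{n}{|F|}$ used in the proof of Theorem \ref{jthm}) directly, so that a set of size $k$ far from $\frac n2$ is charged $1$ against a budget of order $\binom{n}{\lfloor n/2\rfloor}$ rather than $\binom{n}{k}^{-1}$ against a budget of order $\frac1n$. As written, your proposal proves the theorem only with the weaker error term, i.e.\ it reproves the bound of Theorem \ref{sizefork} restricted to $V_s$, not the cited $O(1/n^2)$ statement.
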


Now we prove that the same bound (with a weaker error term) stays valid when the forbidden configuration includes that the $s$ unrelated elements must have the same size.

\begin{theorem}\label{sizefork}
Let $\mcf$ be a family of subsets of $[n]$ that contains no $s+1$ different sets such that $B\subset C_1, C_2, \dots C_s$ and $|C_1|=|C_2|=\dots =|C_s|$.
Then
$$|\mcf|\le \left(1+\frac{2(s-1)}{n}+O\left(\frac{\sqrt{\log n}}{n^{3/2}}\right)\right)\binom{n}{\lfloor\frac{n}{2}\rfloor}.$$
\end{theorem}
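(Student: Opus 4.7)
The plan is to adapt the chain-counting proof of the De Bonis-Katona bound on $La(n,V_s)$ to the size-restricted setting. The key is to split $\mcf$ by set size, handle extreme levels by a trivial tail estimate, and bound the Lubell function of the ``middle'' part by a chain argument.

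Fix $M = \sqrt{A n\log n}$ for a sufficiently large absolute constant $A$, and write $\mcf = \mcf_{\mathrm{mid}} \sqcup \mcf_{\mathrm{ext}}$ where $\mcf_{\mathrm{mid}} = \{F\in\mcf : \bigl||F|-\lfloor n/2\rfloor\bigr|\le M\}$. Chernoff-type tail bounds on binomial coefficients give
$$|\mcf_{\mathrm{ext}}| \le \sum_{|k-\lfloor n/2\rfloor|>M}\binom{n}{k} = O\!\left(\frac{\sqrt{\log n}}{n^{3/2}}\right)\binom{n}{\lfloor n/2\rfloor},$$
which is absorbed into the error term. For $\mcf_{\mathrm{mid}}$ I would bound the Lubell function: for a uniformly random full chain $c$, write $|c\cap\mcf|\le 1+X(c)$ where $X(c)$ counts the $\mcf$-elements on $c$ strictly above the minimum of $c\cap\mcf$. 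Conditioning on that minimum being $B\in\mcf$, the portion of $c$ above $B$ is uniform among chains from $B$ to $[n]$, and the size-restriction hypothesis (at most $s-1$ supersets of $B$ in $\mcf$ per size level) yields
$$\mathbb{E}[X\mid B(c)=B] \le \sum_{C\in\mcf,\ C\supsetneq B}\binom{n-|B|}{|C|-|B|}^{-1}.$$
Using this estimate together with the asymptotic $q(k)=2/k+O(1/k^2)$ from Lemma~\ref{qlemma}, averaging over $B$, and applying Lemma~\ref{lub} with $x=1$ gives the desired bound on $|\mcf_{\mathrm{mid}}|$.

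The main technical obstacle is extracting the sharp constant $2$ in the $1/n$ term: a crude estimate of the displayed sum by $(s-1)\,q(n-|B|)$ with $|B|\approx n/2$ gives only $4(s-1)/n$, a factor of two too weak. The correct constant must come from observing that the dominant contribution to $\mathbb{E}[X\mid B(c)=B]$ comes from supersets $C$ with $|C|=|B|+1$ (each contributing $1/(n-|B|)$), and that the per-level count of $s-1$ cannot be simultaneously saturated at multiple levels without additional structural constraints on $\mcf_{\lfloor n/2\rfloor}$ that are captured by double counting pairs of adjacent sizes; this parallels the sharpness analysis in De Bonis-Katona's proof for the unrestricted $V_s$ problem.
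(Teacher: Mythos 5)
Your setup is exactly the paper's (Lemma \ref{sizeforklemma}): split $\mcf$ at the levels $\lfloor n/2\rfloor\pm O(\sqrt{n\log n})$, kill the tail with the estimate of Lemma \ref{logn}, and bound the Lubell function of the middle part by partitioning chains according to the smallest member $B$ of the middle family on the chain, using that $B$ has at most $s-1$ supersets in $\mcf$ on each size level. Up to the conditional expectation bound $\mathbb{E}[X\mid B]\le\sum_{C\supsetneq B}\binom{n-|B|}{|C|-|B|}^{-1}$ this is correct and is the intended argument. But the step you flag as ``the main technical obstacle'' is a genuine gap, and the mechanism you propose for closing it is not the right one. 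There is no need for any structural constraint saying that the per-level count $s-1$ ``cannot be simultaneously saturated at multiple levels,'' and no double counting of adjacent-size pairs; indeed every level may be saturated and the bound still holds. The factor $2$ comes from a much simpler observation that your write-up forfeits by summing over all $C\in\mcf$ rather than over $C\in\mcf_{\mathrm{mid}}$: once the extreme levels have been discarded, every superset $C\in\mcf_{\mathrm{mid}}$ of $B$ satisfies $|C|-|B|\le 2M$ with $M=O(\sqrt{n\log n})$, so the sum is truncated,
$$\sum_{\substack{C\in\mcf_{\mathrm{mid}}\\ C\supsetneq B}}\binom{n-|B|}{|C|-|B|}^{-1}\le (s-1)\sum_{i=1}^{2M}\binom{n-|B|}{i}^{-1}\le \frac{s-1}{n-|B|}+2M(s-1)\binom{n-|B|}{2}^{-1}=\frac{2(s-1)}{n}+O\!\left(\frac{\sqrt{\log n}}{n^{3/2}}\right),$$
using $n-|B|\ge n/2-M$. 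The crude bound $(s-1)q(n-|B|)\approx 4(s-1)/n$ is too weak precisely because $q$ includes the terms with $i$ close to $n-|B|$, i.e.\ supersets of size close to $n$; those sets were already removed into the tail, so only one ``end'' of $q$ survives. (Also note Lemma \ref{qlemma} as stated gives only $q(k)<4/k$, not $q(k)=2/k+O(1/k^2)$, so even the asymptotic you invoke is not available from it, and in any case it points to the wrong constant, as you yourself observe.) With the truncated sum in place, averaging over $B$ gives $\lambda(\mcf_{\mathrm{mid}})\le 1+\frac{2(s-1)}{n}+O(\sqrt{\log n}\,n^{-3/2})$ and Lemma \ref{lub2} (or Lemma \ref{lub} with $x=1$) finishes the proof as you intended; your appeal to a De Bonis--Katona-style sharpness analysis is unnecessary and, as written, does not constitute a proof.
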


We need the following two lemmas to prove the theorem.

\begin{lemma} \label{logn} \cite {GLu}
Let $k=2\sqrt{n\log n}$. Then
$$2 \sum_{i=0}^{\lfloor\frac{n}{2}-k\rfloor} {n \choose i}\leq {n\choose \lfloor n/2 \rfloor}\cdot O\left(\frac{1}{n^{3/2}}\right).$$
\end{lemma}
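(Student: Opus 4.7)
The goal is to bound the lower binomial tail by an $O(n^{-3/2})$ fraction of the central binomial coefficient, where $k=2\sqrt{n\log n}$. The plan is to reduce the statement to a standard Chernoff--Hoeffding tail estimate, and then convert the resulting upper bound into one relative to $\binom{n}{\lfloor n/2\rfloor}$.

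First, I would interpret the tail sum probabilistically. Writing $X\sim\mathrm{Bin}(n,1/2)$, we have
$$\sum_{i=0}^{\lfloor n/2-k\rfloor}\binom{n}{i}=2^n\cdot\Pr\!\left[X\le \tfrac{n}{2}-k\right].$$
Hoeffding's inequality gives $\Pr[X\le n/2-k]\le e^{-2k^2/n}$, and substituting $k=2\sqrt{n\log n}$ yields $2k^2/n=8\log n$, hence the probability is at most $n^{-8}$. Consequently
$$2\sum_{i=0}^{\lfloor n/2-k\rfloor}\binom{n}{i}\le 2^{n+1}\cdot n^{-8}.$$

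Second, I would invoke the standard lower bound $\binom{n}{\lfloor n/2\rfloor}\ge 2^n/(n+1)$, which holds because the central coefficient is the largest of the $n+1$ summands totalling $2^n$ (Stirling sharpens this to $\binom{n}{\lfloor n/2\rfloor}\sim 2^n\sqrt{2/(\pi n)}$, but the weaker estimate already suffices). Combining,
$$2\sum_{i=0}^{\lfloor n/2-k\rfloor}\binom{n}{i}\le 2(n+1)n^{-8}\cdot\binom{n}{\lfloor n/2\rfloor}=\binom{n}{\lfloor n/2\rfloor}\cdot O(n^{-7}),$$
which is far stronger than the claimed $O(n^{-3/2})$ factor.

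A purely combinatorial variant, should one wish to avoid probabilistic inequalities altogether, is to use the ratio $\binom{n}{i}/\binom{n}{i+1}=(i+1)/(n-i)\le 1-\Theta(k/n)$ valid for $i\le n/2-k$; the tail is then majorised by a geometric series, giving $\binom{n}{\lfloor n/2-k\rfloor}\cdot O(n/k)$, after which a direct Stirling comparison produces $\binom{n}{\lfloor n/2-k\rfloor}/\binom{n}{\lfloor n/2\rfloor}\le e^{-2k^2/n}(1+o(1))=n^{-8+o(1)}$. I do not anticipate any real obstacle: the choice $k=2\sqrt{n\log n}$ leaves so much slack (producing a polynomial factor of $n^{-7}$ against the required $n^{-3/2}$) that essentially any standard quantitative form of Chernoff or of Stirling closes the argument.
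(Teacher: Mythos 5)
Your proof is correct: the Hoeffding bound $\Pr[X\le n/2-k]\le e^{-2k^2/n}=n^{-8}$ for $k=2\sqrt{n\log n}$, combined with $\binom{n}{\lfloor n/2\rfloor}\ge 2^n/(n+1)$, gives the tail bound $O(n^{-7})\binom{n}{\lfloor n/2\rfloor}$, comfortably inside the claimed $O(n^{-3/2})$. The paper does not prove this lemma but cites it from Griggs--Lu, where the argument is essentially this same Chernoff-type tail estimate, so there is nothing further to compare.
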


\begin{lemma} \label{sizeforklemma}
Let $\mcf$ be a family satisfying the conditions of Theorem \ref{sizefork}. Let $k=2\sqrt{n\log n}$ and
$$\mcf_k=\left\{F\in\mcf~:~\frac{n}{2}-k\le |F|\le \frac{n}{2}+k\right\}.$$
Then
$$\lambda(\mcf_k)\le 1+\frac{2(s-1)}{n}+O(kn^{-2}).$$
\end{lemma}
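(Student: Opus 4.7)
My plan is to apply the chain-counting identity \eqref{lubeq}. For a uniformly random chain $c$ of $[n]$, set $f(c):=|c\cap\mcf_k|$; by \eqref{lubeq} we have $\lambda(\mcf_k)=\mathop{\mathrm{ave}}_{c} f(c)$. Writing $f=\mathbf{1}_{f\ge 1}+(f-1)^+$ and using the elementary pointwise inequality $(f-1)^+\le\binom{f}{2}$ (which is immediate case-by-case for every non-negative integer $f$), I get $\lambda(\mcf_k)\le 1+\mathop{\mathrm{ave}}_{c}\binom{f(c)}{2}$, so it suffices to control the average number of comparable pairs of $\mcf_k$-sets lying on a random chain.

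Expanding this second average by summing over pairs and then grouping by the smaller set,
$$\mathop{\mathrm{ave}}_{c}\binom{f(c)}{2}=\sum_{\substack{B,C\in\mcf_k\\ B\subsetneq C}}\binom{n}{|B|}^{-1}\binom{n-|B|}{|C|-|B|}^{-1}\le (s-1)\sum_{B\in\mcf_k}\binom{n}{|B|}^{-1}\sum_{j}\binom{n-|B|}{j-|B|}^{-1},$$
where the inequality uses the forbidden-configuration hypothesis: for each $B\in\mcf_k$ and each size $j$, at most $s-1$ sets $C\in\mcf_k$ with $B\subsetneq C$ satisfy $|C|=j$. The inner sum runs over admissible sizes $j\in(|B|,\frac{n}{2}+k]$.

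The crux is estimating this inner sum. Writing $i:=j-|B|$, the constraints $|B|\ge\frac{n}{2}-k$ and $j\le\frac{n}{2}+k$ force $1\le i\le 2k$, while $n-|B|\ge\frac{n}{2}-k$ is much larger than $2k$, so $\binom{n-|B|}{i}$ is strictly increasing in $i$ throughout this range. Hence the $i=1$ contribution $\frac{1}{n-|B|}$ dominates, and each of the remaining terms is bounded above by $\binom{n-|B|}{2}^{-1}=O(n^{-2})$, so together they contribute $O(k/n^2)$. Combined with $\frac{1}{n-|B|}\le\frac{2}{n-2k}=\frac{2}{n}+O(k/n^2)$ (valid since $|B|\le\frac{n}{2}+k$), the inner sum is $\frac{2}{n}+O(k/n^2)$ uniformly in $B$. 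Substituting back yields
$$\mathop{\mathrm{ave}}_{c}\binom{f(c)}{2}\le (s-1)\left(\frac{2}{n}+O\!\left(\frac{k}{n^2}\right)\right)\lambda(\mcf_k),$$
and combining with the first display gives $\lambda(\mcf_k)\le 1+\alpha\lambda(\mcf_k)$ with $\alpha=\frac{2(s-1)}{n}+O(kn^{-2})$. Solving, and expanding $(1-\alpha)^{-1}=1+\alpha+O(\alpha^2)=1+\alpha+O(n^{-2})$, delivers the claimed bound.

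The main obstacle I expect is squeezing the error down to $O(kn^{-2})$: a naive use of Lemma \ref{qlemma} on the full range $i\in[1,n-|B|]$ would introduce an $O(1)$ contribution from the terms with $i$ close to $n-|B|$ and ruin the bound. What saves the proof is precisely that the restriction to $\mcf_k$ truncates the inner binomial sum before the reciprocals grow back, so that only the $i=1$ term lies at order $1/n$ while every other one is already $O(n^{-2})$.
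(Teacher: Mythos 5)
Your proof is correct, but it takes a genuinely different route from the paper's. The paper uses the partition method: it splits the chains into classes $\mcc_A$ according to the smallest member $A$ of $\mcf_k$ on the chain (plus a class $\mcc_0$ of chains missing $\mcf_k$), bounds the conditional average of $|c\cap\mcf_k|$ on each class by $1+(s-1)\sum_{i=1}^{n/2+k-|A|}\binom{n-|A|}{i}^{-1}$, and takes the maximum over classes, which yields $\lambda(\mcf_k)\le 1+\frac{2(s-1)}{n}+O(kn^{-2})$ directly. You instead make a global pair count: from the pointwise bound $f\le \mathbf{1}_{\{f\ge1\}}+\binom{f}{2}$ you reduce to counting comparable pairs of $\mcf_k$-sets on a random chain, group pairs by the smaller set, and use the hypothesis (at most $s-1$ supersets per size) together with the same truncated reciprocal-binomial sum --- the identical key computation, with the truncation of sizes at $\frac{n}{2}+k$ doing the same work in both proofs. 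What your version buys: it avoids the partition and any discussion of the conditional distribution of the chain above the minimal element (a point the paper passes over quickly), at the cost of a slight overcount for chains carrying three or more members of $\mcf_k$ and of the self-referential inequality $\lambda(\mcf_k)\le 1+\alpha\,\lambda(\mcf_k)$, which you must solve; this requires $\alpha<1$, i.e.\ $n$ large (harmless under the $O$-notation, with small $n$ absorbed into the implicit constant), and the extra $O(\alpha^2)=O(n^{-2})$ is swallowed by the $O(kn^{-2})$ error. The paper's max-over-parts bound is tighter pointwise and needs no such inversion, but both arguments hinge on the same estimate $\sum_{i=1}^{n/2+k-|B|}\binom{n-|B|}{i}^{-1}\le\frac{2}{n}+O(kn^{-2})$, and your write-up of that estimate (only the $i=1$ term is of order $1/n$, the remaining at most $2k$ terms are $O(n^{-2})$ each) is sound.
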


\begin{proof}
For all $A\in\mcf$, let $\mcc_A$ denote the set of chains whose smallest element from $\mcf_k$ is $A$. Let $\mcc_0$ denote the set of chains that contain no element of $\mcf_k$. These sets $\mcc_A$ and $\mcc_0$ form a partition of $\mcc$. Obviously
$$\underset{c\in \mcc_{0}}{ave} (|c\cap\mcf_k|)=0.$$

Let $A\in\mcf_k$ be an arbitrary set. Let $X_1, X_2,\dots X_t$ denote the sets from $\mcf_k$ that contain $A$. A random chain between $A$ and $[n]$ meets $X_i$ with a probability of ${n-|A|\choose |X_i|-|A|}^{-1}$. There are no $s$ sets of the same size in $\{X_1, X_2,\dots X_t\}$, so

$$\underset{c\in \mcc_{A}}{ave} (|c\cap\mcf_k|) \le 1+(s-1)\sum_{i=1}^{\frac{n}{2}+k-|A|} {n-|A|\choose i}^{-1}\le 1+\frac{s-1}{\frac{n}{2}-k}+O(n^{-2})\le 1+\frac{2(s-1)}{n}+O(kn^{-2}).$$

We proved the bound for all $\mcc_A$ and also for $\mcc_0$, so it holds for $\mcc$ too.
$$\lambda(\mcf_k)=\underset{c\in \mcc}{ave} (|c\cap\mcf_k|)\le
\max\left( \max_{A\in\mcf_k} \underset{c\in \mcc_A}{ave} (|c\cap\mcf_k|),~\underset{c\in \mcc_0}{ave} (|c\cap\mcf_k|) \right)
\le 1+\frac{2(s-1)}{n}+O(kn^{-2}).$$
\end{proof}

\begin{remark}
In the above proof, we divided the set of all chains into many parts and investigated them separately. This is technique is called the partition method, developed by Griggs, Lu and Li. \cite{diamond} (See also \cite{partition}.) The proofs of Theorem \ref{sizebthm}, Theorem \ref{jthm} and Theorem \ref{sizediathm} will also use a partition method, though the the partitions are defined differently is each case.
\end{remark}

\begin{proof} (of Theorem \ref{sizefork})

By Lemmas \ref{lub2} and \ref{sizeforklemma} we get that
$$|\mcf_k|\le \lambda(\mcf_k){n\choose \lfloor n/2 \rfloor}\le \left(1+\frac{2(s-1)}{n}+O\left(\frac{\sqrt{\log n}}{n^{3/2}}\right)\right){n\choose \lfloor n/2 \rfloor}.$$
Lemma \ref{logn} implies that the number of the remaining sets is negligible compared to that.
$$|\mcf\backslash\mcf_k|\le 2 \sum_{i=0}^{\lfloor\frac{n}{2}-k\rfloor} {n \choose i}\leq {n\choose \lfloor n/2 \rfloor}\cdot O\left(\frac{1}{n^{3/2}}\right).$$
Therefore
$$|\mcf|\le \left(1+\frac{2(s-1)}{n}+O\left(\frac{\sqrt{\log n}}{n^{3/2}}\right)\right){n\choose \lfloor n/2 \rfloor}.$$
\end{proof}

\begin{remark}\label{vremark}
One can get an alternative bound in Theorem \ref{sizefork} that is weaker for large $n$ but contains no unspecified error term. Follow the proof, but define $\mcf_k$ as the family of all sets from $\mcf$ that are different from $[n]$.
$$\underset{c\in \mcc_{A}}{ave} (|c\cap\mcf_k|) \le 1+(s-1)\sum_{i=1}^{n-1-|A|} {n-|A|\choose i}^{-1}=1+(s-1)q(n-|A|).$$
From Lemma \ref{qlemma}, we get that $q(n-|A|)\le\frac{2}{3}$, so
$$\lambda(\mcf_k)\le 1+\frac{2}{3}(s-1).$$
Using Lemma \ref{lub} (with $x=1$, $y=\frac{2}{3}(s-1)$) and $|\mcf\backslash\mcf_k|\le 1$ it follows that
\begin{equation}
|\mcf|\le \binom{n}{\lfloor\frac{n}{2}\rfloor}+\frac{2}{3}(s-1)\binom{n}{\lfloor\frac{n}{2}\rfloor+1}+1.
\end{equation}
\end{remark}

Using our results about fork posets, we can prove upper bounds for batons too.

\begin{notation}
The baton poset $\mcp_h(s,t))$ consists of $h+s+t-2$ elements $A_1,\dots A_s, B_1,\dots B_{h-2}, C_1,\dots C_t$. The relations are $A_1, A_2, \dots , A_s < B_1< B_2 < \dots < B_{h-2} < C_1, C_2, \dots , C_t$.
\end{notation}

\begin{theorem}({\bf Griggs-Lu} \cite{GLu})
$$La(n,\mcp_h(s,t))\le\Sigma(n,h-1)+{n\choose \left\lfloor \frac{n+h}{2}\right\rfloor}\left(\frac{2h(s+t-2)}{n}+O\left(\frac{\sqrt{\log(n)}}{n^{3/2}}\right)\right).$$
\end{theorem}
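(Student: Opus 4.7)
The bound combines chain-avoidance (producing the main term $\Sigma(n,h-1)$) with a fork-type analysis of the anchor sets (producing the correction $\frac{2h(s+t-2)}{n}$), via the partition method already used in the proof of Theorem \ref{sizefork}. First truncate to middle layers: set $k=2\sqrt{n\log n}$ and $\mcf_k=\{F\in\mcf:\tfrac{n}{2}-k\le|F|\le\tfrac{n}{2}+k\}$. By Lemma \ref{logn}, $|\mcf\setminus\mcf_k|=O(n^{-3/2})\binom{n}{\lfloor n/2\rfloor}$, absorbable into the error term. It will then suffice to establish
\[
\lambda(\mcf_k)\le(h-1)+\frac{2h(s+t-2)}{n}+O\!\left(\frac{\sqrt{\log n}}{n^{3/2}}\right),
\]
after which Lemma \ref{lub} with $x=h-1$, together with the identity $\lceil(n+h-1)/2\rceil=\lfloor(n+h)/2\rfloor$, yields the stated inequality.

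To bound $\lambda(\mcf_k)$ I partition all chains of $[n]$ according to a candidate baton \emph{spine} inside $\mcf_k$. For each chain $c$ containing at least $h-2$ elements of $\mcf_k$, distinguish a fixed $(h-2)$-subchain $B_1\subset\cdots\subset B_{h-2}$ of $c\cap\mcf_k$ (say, the first $h-2$ elements from the bottom), and index the partition classes by this tuple; chains with fewer $\mcf_k$-elements go into a residual class of average $\le h-2$. Inside the class of a spine $(B_1,\ldots,B_{h-2})$, the $B_i$'s contribute $h-2$, while any extra $\mcf_k$-element of the chain is either a \emph{low anchor} strictly below $B_1$ or a \emph{high anchor} strictly above $B_{h-2}$. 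Baton-avoidance precludes having $\ge s$ low anchors \emph{and} $\ge t$ high anchors simultaneously; by subclassifying each class according to which of the two is deficient, and invoking the fork-style estimate from Lemma \ref{sizeforklemma}, the expected anchor count on a uniformly random chain extending the spine is bounded by $\frac{2(s-1)}{n}+\frac{2(t-1)}{n}+O(kn^{-2}) = \frac{2(s+t-2)}{n}+O(kn^{-2})$.

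The factor $h$ in the final coefficient comes from the multiplicity of valid spine placements inside a long $\mcf_k$-chain: a chain of $\ell\ge h-2$ elements of $\mcf_k$ admits up to $\ell-h+3$ candidate spines, each supplying an independent fork-type anchor count, and a careful reassignment argument shows the cumulative correction per chain is at most $h$ times the single-spine estimate. Combining this with the truncation bookkeeping finishes the proof. \textbf{The main obstacle} is exactly this coefficient $h$: the asymmetric ``$\ge s$ low \emph{and} $\ge t$ high is forbidden'' constraint forces a subclassification inside each partition class, and the several possible spine offsets inside a chain have to be summed without double-counting, so the partition must be designed so that every chain contributes once to the $(h-1)$ term but its anchor contributions are aggregated over all $h$ valid spine positions, matching the precise coefficient $2h(s+t-2)$ in the statement.
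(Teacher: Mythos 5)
Your outer scaffolding (truncation via Lemma \ref{logn}, then Lemma \ref{lub} with $x=h-1$) is fine, but the heart of the argument is missing, and the part that is sketched would not work as stated. First, the decisive quantitative step is pure assertion: ``a careful reassignment argument shows the cumulative correction per chain is at most $h$ times the single-spine estimate'' is exactly the claim that would need a proof, and none is given. Second, the spine partition does not mesh with baton-avoidance the way you claim. With the spine defined as the bottom $h-2$ elements of $c\cap\mcf_k$, a chain in that class has \emph{no} low anchors at all, so the dichotomy ``fewer than $s$ low or fewer than $t$ high'' puts no bound on the number of high anchors of the chain; moreover, in an embedding of $\mcp_h(s,t)$ the $s$ bottom sets and $t$ top sets need not lie on the chain being examined (the $A_i$ may be nested and sit anywhere below $B_1$), so per-chain anchor counts are not directly constrained by baton-avoidance -- the structural information has to be extracted by classifying the \emph{sets} of $\mcf_k$, not the chains. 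Third, Lemma \ref{sizeforklemma} cannot simply be ``invoked'': its hypothesis is that no member of the family has $s$ same-size supersets inside the family, which is not implied by baton-avoidance (the family may contain many forks); in the paper that lemma is only applied to subfamilies defined precisely so that the fork-freeness holds by construction.

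For comparison, the paper does not prove this cited Griggs--Lu bound by a chain partition at all; it proves the stronger Theorem \ref{restrbaton} by splitting $\mcf_k$ into three subfamilies: $\mcf^1$ (sets not containing $s$ other same-size members of $\mcf_k$), $\mcf^2$ (sets containing $s$ such and contained in $t$ such), and $\mcf^3$ (the rest). Avoidance of the configuration forces $\mcf^2$ to contain no chain of $h-2$ sets, so $\lambda(\mcf^2)\le h-3$, while Lemma \ref{sizeforklemma} applied to $\mcf^3$ and, via complementation, to $\mcf^1$ gives $1+\frac{2(t-1)}{n}+O(kn^{-2})$ and $1+\frac{2(s-1)}{n}+O(kn^{-2})$; summing yields $\lambda(\mcf_k)\le h-1+\frac{2(s+t-2)}{n}+O(kn^{-2})$, and Lemma \ref{lub} plus Lemma \ref{logn} finish. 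Since any configuration with the size restrictions is in particular an embedding of $\mcp_h(s,t)$, the Griggs--Lu statement follows at once from Theorem \ref{restrbaton}, even with the better coefficient $2(s+t-2)$; alternatively the same three-part decomposition works verbatim in the unrestricted setting using the unrestricted fork bound of \cite{rfork}. This also shows that your ``main obstacle'' is illusory: you only need an upper bound, so there is no reason to engineer the factor $h$ in $2h(s+t-2)$ -- a smaller coefficient is perfectly acceptable, and chasing the exact coefficient led the sketch toward an unnecessary and unproven reassignment argument.
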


We strengthen this theorem in two ways. We add size restrictions to the forbidden poset and even under this weaker condition, we prove a stronger bound. (An $h$ can be omitted due to more careful analysis.)

\begin{theorem}\label{restrbaton}
Let $\mcf$ be a family of subsets of $[n]$ that contains no $h+s+t-2$ sets $A_1,\dots A_s, B_1,\dots B_{h-2},$ $C_1,\dots C_t$ such that $A_1, A_2, \dots , A_s \subset B_1\subset B_2\subset \dots \subset B_{h-2}\subset C_1, C_2, \dots , C_t$ and $|A_1|=|A_2|=\dots =|A_s|, ~|C_1|=|C_2|=\dots =|C_t|$. ($s,t\geq 1$, $h\ge 3$.) Then
$$|\mcf|\leq\Sigma(n,h-1)+{n\choose \left\lfloor \frac{n+h}{2}\right\rfloor}\left(\frac{2(s+t-2)}{n}+O\left(\frac{\sqrt{\log(n)}}{n^{3/2}}\right)\right).$$
\end{theorem}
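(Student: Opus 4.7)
The plan is to imitate the proof of Theorem \ref{sizefork}: set $k=2\sqrt{n\log n}$, define
$$\mcf_k=\{F\in\mcf:\tfrac{n}{2}-k\le|F|\le\tfrac{n}{2}+k\},$$
discard $\mcf\setminus\mcf_k$ via Lemma \ref{logn} (its contribution is absorbed in the $O(\sqrt{\log n}/n^{3/2})$ term), and reduce the theorem to the Lubell bound
$$\lambda(\mcf_k)\le (h-1)+\frac{2(s+t-2)}{n}+O\!\left(\frac{\sqrt{\log n}}{n^{3/2}}\right).\qquad(\star)$$
Lemma \ref{lub} applied with $x=h-1$ then delivers the stated estimate on $|\mcf_k|$, since $\lceil\tfrac{n+h-1}{2}\rceil=\lfloor\tfrac{n+h}{2}\rfloor$.

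To prove $(\star)$ I would use the partition method, partitioning the chains as $\mcc=\mcc_0\cup\bigcup_{A\in\mcf_k}\mcc_A$ by the smallest $\mcf_k$-element $A=A(c)$ on $c$. Factoring $c=c_1\cup c_2$ through $A$ as in Lemma \ref{sizeforklemma},
$$\mathop{\mathrm{ave}}_{c\in\mcc_A}|c\cap\mcf_k|=1+\sum_{X\in\mcf_k,\,X\supsetneq A}\binom{n-|A|}{|X|-|A|}^{-1},$$
and the goal is to bound this inner sum by $(h-2)+\tfrac{2(s+t-2)}{n}+O(\sqrt{\log n}/n^{3/2})$. Averaging over $A$ (using $\sum_A|\mcc_A|/n!\le 1$) then yields $(\star)$.

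The baton hypothesis enters asymmetrically at the two endpoints. If the upper shadow $\mcg_A=\{X\in\mcf_k:X\supsetneq A\}$ contains a chain $X_1\subsetneq\cdots\subsetneq X_{h-1}$, then $A\subsetneq X_1\subsetneq\cdots\subsetneq X_{h-1}$ is an $h$-chain in $\mcf_k$, so avoiding $\mcp_h(s,t)$ forces either (a) at most $s-1$ sets of size $|A|$ in $\mcf_k$ lie inside $X_1$, or (b) at most $t-1$ sets of size $|X_{h-1}|$ in $\mcf_k$ contain $X_{h-2}$. I would split $\mcg_A$ into the sub-family containing no $(h-1)$-chain---whose Lubell contribution is trivially at most $h-2$, since the expected number of hits on a chain cannot exceed the longest chain length in the family---and the rest, on which the dichotomy (a)/(b) yields two one-sided fork estimates, each contributing $\tfrac{2(s-1)}{n}$ or $\tfrac{2(t-1)}{n}$ plus lower-order terms via the same $\sum_{i\ge 1}\binom{n-|A|}{i}^{-1}\le\tfrac{2}{n}+O(kn^{-2})$ computation used in Lemma \ref{sizeforklemma}.

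The principal obstacle---and the ``more careful analysis'' mentioned after the theorem---is removing the factor $h$ present in the Griggs--Lu bound $\tfrac{2h(s+t-2)}{n}$. That factor arose from applying a fork-type estimate at every one of the $h$ chain levels, but here the size restrictions of $\mcp_h(s,t)$ are present only at its two antichain endpoints $\{A_i\}$ and $\{C_j\}$; the interior $B_i$'s carry no multiplicity constraint, so the fork estimate is needed only twice. Executing the split of $\mcg_A$ so that the main term $(h-2)$ is decoupled from the two endpoint excesses $\tfrac{2(s-1)}{n}$ and $\tfrac{2(t-1)}{n}$, without double-counting the disjunction (a)/(b), is the technical heart of the proof.
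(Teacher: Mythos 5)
Your opening reduction --- pass to $\mcf_k$ with $k=2\sqrt{n\log n}$, discard $\mcf\setminus\mcf_k$ by Lemma \ref{logn}, prove $\lambda(\mcf_k)\le (h-1)+\frac{2(s+t-2)}{n}+O(kn^{-2})$, and finish with Lemma \ref{lub} at $x=h-1$ --- is exactly the paper's frame. The gap is in how you propose to prove $(\star)$: the intermediate statement you aim for, a \emph{uniform} bound $\mathrm{ave}_{c\in\mcc_A}(|c\cap\mcf_k|)\le (h-1)+\frac{2(s+t-2)}{n}+O(\sqrt{\log n}/n^{3/2})$ for every single $A$, is false. Unlike the fork situation of Lemma \ref{sizeforklemma}, where the forbidden configuration has one bottom element and hence each $A$ individually constrains its own up-set, the baton constrains a chain only through \emph{both} of its ends, so one fixed $A$ may have a rich up-set. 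Concretely, take $h=3$, $s=t=2$, fix $A$ of size about $n/2$, let $S$ be a set of $u=n^{0.9}$ elements outside $A$, and let $\mcf$ consist of $A$, all sets $A\cup\{x\}$ with $x\notin A$, and all sets $A\cup\{x,y\}$ with $x,y\in S$. No member of $\mcf$ has simultaneously two equal-size proper subsets and two equal-size proper supersets in $\mcf$, so the hypothesis of the theorem holds; yet $\sum_{X\in\mcf,\,X\supsetneq A}\binom{n-|A|}{|X|-|A|}^{-1}=1+\binom{u}{2}\binom{n-|A|}{2}^{-1}=1+\Theta(n^{-1/5})$, far above your target $(h-2)+\frac{2(s+t-2)}{n}+O(\sqrt{\log n}/n^{3/2})$. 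Your dichotomy (a)/(b) is also misstated (the bottom sets $A_i$ of a baton may have any common size, not necessarily $|A|$), and, more importantly, alternative (a) constrains sets \emph{below} the chain, which does nothing to limit the Lubell mass of $\mcg_A$; the ``split of $\mcg_A$'' that you defer as the technical heart is precisely the step that cannot be carried out per fixed $A$.

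The paper avoids per-$A$ bounds altogether by a global decomposition of $\mcf_k$ into three subfamilies: $\mcf^1$, the sets that do not contain $s$ other equal-size members of $\mcf_k$; $\mcf^2$, the sets that both contain $s$ equal-size members below and are contained in $t$ equal-size members above; and $\mcf^3$, the rest (the $s$-fork below but no $t$-fork above). Then $\mcf^2$ contains no chain of $h-2$ sets (such a chain would close a forbidden baton), so $\lambda(\mcf^2)\le h-3$; $\mcf^3$ avoids the upward $t$-fork with equal sizes, so Lemma \ref{sizeforklemma} gives $\lambda(\mcf^3)\le 1+\frac{2(t-1)}{n}+O(kn^{-2})$; and $\mcf^1$, after complementation, gives $\lambda(\mcf^1)\le 1+\frac{2(s-1)}{n}+O(kn^{-2})$. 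Summing the three bounds yields $(\star)$. Your structural insight --- that the size restrictions sit only at the two ends of the baton, so only two fork-type corrections $\frac{2(s-1)}{n}$ and $\frac{2(t-1)}{n}$ should be paid rather than Griggs--Lu's factor $h$ --- is the right intuition, but it has to be implemented by splitting the family, not by splitting the chains above each fixed minimal element.
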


\begin{proof}

Define $k$ and $\mcf_k$ as in Lemma \ref{sizeforklemma}.

Let $\mcf^1$ be the family of those members of $\mcf_k$ that do not contain $s$ other members of the same size from $\mcf_k$. Let $\mcf^2$ be the family of those members of $\mcf_k$, that contain $s$ other members of the same size from $\mcf_k$ and are also contained in $t$ other members of the same size from $\mcf_k$. Let $\mcf^3$ denote the family of the remaining sets of $\mcf_k$. (They contain $s$ other members of the same size from $\mcf_k$, but are not contained in $t$ other members of the same size from $\mcf_k$.) We will give upper bounds on the Lubell functions of these families separately.

There is no chain of $h-2$ sets in $\mcf^2$, otherwise a forbidden configuration would appear. It means that every chain contains at most $h-3$ members of $\mcf^2$, so
$$\lambda(\mcf^2)=\underset{c\in \mcc}{ave}(|c\cap\mcf^2|)\le h-3.$$

There is no member of $\mcf^3$ that contains $t$ other sets of the same size from $\mcf^3$. Lemma \ref{sizeforklemma} implies
$$\lambda(\mcf^3)\le 1+\frac{2(t-1)}{n}+O(kn^{-2}).$$

There is no member of $\mcf^1$ that contains $s$ other sets of the same size from $\mcf^1$. By considering the complements of the sets in $\mcf^1$, an upper bound can be given by Lemma \ref{sizeforklemma}:
$$\lambda(\mcf^1)\le 1+\frac{2(s-1)}{n}+O(kn^{-2}).$$

After adding these bounds, we get
$$\lambda(\mcf_k)=\lambda(\mcf^1)+\lambda(\mcf^2)+\lambda(\mcf^3)\le h-1+\frac{2(s+t-2)}{n}+O(kn^{-2}).$$

Lemma \ref{lub} implies
$$|\mcf_k|\le \Sigma(n,h-1)+{n\choose \left\lfloor \frac{n+h}{2}\right\rfloor}\left(\frac{2(s+t-2)}{n}+O\left(\frac{\sqrt{\log(n)}}{n^{3/2}}\right)\right).$$

Since Lemma \ref{logn} says that
$$|\mcf\backslash\mcf_k|\le {n\choose \lfloor n/2 \rfloor}\cdot O\left(\frac{1}{n^{3/2}}\right),$$
which is negligible compared to the above, the statement of the theorem follows.
\end{proof}

Next, we generalize the following theorem about the butterfly poset $\mathcal{B}$.

\begin{theorem}({\bf De Bonis-Katona-Swanepoel}, \cite{DKS})
Let $\mathcal{B}$ denote the poset that has 4 elements and the relations $A,B<C,D$. Then La$(n,\mathcal{B})=\Sigma(n,2)$.
\end{theorem}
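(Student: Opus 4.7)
The lower bound is witnessed by the two consecutive central layers $\mcf_0=\binom{[n]}{k}\cup\binom{[n]}{k+1}$ with $k=\lfloor(n-1)/2\rfloor$, of total size $\Sigma(n,2)$. Any strict inclusion inside $\mcf_0$ raises the cardinality by exactly $1$, so for four sets $A,B\subset C,D$ in $\mcf_0$ with $A\ne B$ and $C\ne D$ one must have $|A|=|B|=k$ and $|C|=|D|=k+1$; writing $C=A\cup\{x\}$ and $D=A\cup\{y\}$ with $x\ne y$, the relation $B\subset D$ together with $|B|=|A|$ forces $x=y$, a contradiction.

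For the upper bound, the pivotal structural observation is that if $F\in\mcf$ admits two distinct strict supersets $F_1,F_2\in\mcf$ and at least one strict subset $F'\in\mcf$, then $\{F',F,F_1,F_2\}$ forms a butterfly. Hence every non-minimal element of $\mcf$ has at most one strict superset in $\mcf$, and dually every non-maximal element has at most one strict subset. In particular every chain in $(\mcf,\subset)$ has length at most $3$, and on each length-$3$ chain $F^-\subsetneq F\subsetneq F^+$ of $\mcf$ the middle element $F$ is \emph{rigid}: $F^+$ is its unique strict superset and $F^-$ its unique strict subset in $\mcf$. I would then partition the full chains of $[n]$ by their topmost element in $\mcf$ (putting chains disjoint from $\mcf$ in a null class): in a class headed by a non-maximal $A$ one gets $|c\cap\mcf|\le 2$ for free, while in a class headed by a maximal $A$ the bound $|c\cap\mcf|\le 3$ is attained only when $c$ also passes through the corresponding rigid $3$-chain.

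The main obstacle I foresee is closing the gap between the naive conclusion $\lambda(\mcf)\le 3$, which via Lemma~\ref{lub} would only give $\Sigma(n,3)$, and the sharp target $\Sigma(n,2)$. To eliminate the surplus contribution of chains passing through $3$-chains of $\mcf$, I would attempt a replacement step: swap each rigid middle $F$ for one of its unique neighbors $F^\pm$, using the rigidity and an injection into sets of size $k$ or $k+1$ not already in $\mcf$, so as to reduce $\mcf$ to a family of height at most $2$ of the same cardinality; the LYM inequality applied to the two resulting antichains then gives $|\mcf|\le\Sigma(n,2)$. The delicate point is showing that such a replacement can always be performed without creating a butterfly, because the rigid middles have very restricted direct neighbors in $\mcf$ yet may interact indirectly with the rest of the family through incomparable sets.
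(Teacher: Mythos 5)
Your lower-bound construction and your structural observations are correct: in a butterfly-free family every member with a strict subset in $\mcf$ has at most one strict superset in $\mcf$ (and dually), so no four members of $\mcf$ form a chain. But at that point what you have actually proved is only that every full chain meets $\mcf$ in at most three sets, i.e.\ $\lambda(\mcf)\le 3$, which via Lemma \ref{lub} gives $\Sigma(n,3)$; the entire distance to the sharp bound $\Sigma(n,2)$ is carried by the ``replacement step,'' which is a plan rather than an argument, as you acknowledge yourself. Concretely: the unique neighbours $F^-\subsetneq F\subsetneq F^+$ of a rigid middle are themselves members of $\mcf$, so the swap must go to brand-new sets near the middle levels; nothing prevents such a new set from containing a minimal member and being contained in a maximal member of $\mcf$, recreating a $3$-chain; distinct rigid middles may compete for the same targets (there can be linearly many rigid middles over one top, e.g.\ $\{1\}\subset\{1,i\}\subset[n]$ for all $i\ge 2$), and when the middle levels are crowded the required injection need not exist. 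Until a cardinality-preserving modification to a family with no $3$-chain is actually constructed (after which one needs Erd\H{o}s' theorem on unions of two antichains, not just LYM), the upper bound is not proved.

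For comparison: the paper does not prove this statement at all --- it quotes it from \cite{DKS} --- and its own contribution is the size-restricted analogue, Theorem \ref{sizebthm}, whose method is the natural way to convert your correct structural lemma into the sharp bound, at least for large $n$, without any replacement. One must first treat $\emptyset$ and $[n]$ separately; this is unavoidable, since $\lambda(\mcf)\le 2$ is false in general (the butterfly-free family $\{\emptyset\}\cup\{\{i\}:i\in[n]\}\cup\{[n]\}$ has Lubell function $3$), and if $\emptyset\in\mcf$ then $\mcf\setminus\{\emptyset\}$ contains no $V$, so the De Bonis--Katona bound (or Remark \ref{vremark}) applies. With $\emptyset,[n]\notin\mcf$, partition the chains by their smallest member of the subfamily $\mcg$ of sets having both a subset and a superset in $\mcf$, exactly as in Theorem \ref{sizebthm}. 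Your uniqueness observation makes the key estimate easier than (\ref{mcsineq}): each $F\in\mcg$ has exactly one superset and one subset in $\mcf$, met by a random chain through $F$ with probability at most $\frac{1}{n-|F|}$ and $\frac{1}{|F|}$ respectively, and $2\le|F|\le n-2$ gives the analogue of $1+q(n-|F|)+q(|F|)\le 2$ with no need for Lemma \ref{qlemma} iii); the subtraction step (\ref{cfeq}), together with the fact that chains avoiding $\mcg$ meet $\mcf$ at most twice, then yields $\lambda(\mcf)\le 2$ and $|\mcf|\le\Sigma(n,2)$ by Lemma \ref{lub}. Handling all small $n$, as in \cite{DKS}, requires additional care beyond this.
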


Our theorem gives the same bound when $n$ is large enough even with size restrictions. It is believed that the theorem holds for smaller values, but proving it would require more complicated calculations or case-by-case analysis.

\begin{theorem}\label{sizebthm}
Let $\mcf$ be a family of subsets of $[n]$, where $n\ge 13$. Assume that there are no 4 different subsets $A,B,C,D$ in $\mcf$ such that $A$ and $B$ are both subsets of both $C$ and $D$ and either $|A|=|B|$ or $|C|=|D|$ holds. Then $|\mcf|\le \Sigma(n,2)$.
\end{theorem}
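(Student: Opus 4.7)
The plan is to adapt the partition method from the proofs of Theorems \ref{sizefork} and \ref{restrbaton}, combined with the inequality $q(a)+q(b)\le 1$ for $a+b\ge 13$ from Lemma \ref{qlemma} iii). Partition $\mcf$ as $\mcf = \mcf^1 \sqcup \mcf^3$, where $\mcf^1$ consists of those $F\in\mcf$ that do not contain two distinct members of $\mcf$ of the same size, and $\mcf^3$ is the complement. If $F\in\mcf^3$ has witnesses $A,B\in\mcf$ with $|A|=|B|$ and $A,B\subsetneq F$, then condition (a) rules out the existence of any two same-size supersets $C_1,C_2$ of $F$ in $\mcf$ (else $A,B\subset C_1,C_2$ would be a forbidden four-tuple). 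Hence $\mcf^3$ is fork-free in the sense of Theorem \ref{sizefork} with $s = 2$. By complementing the whole family, $\mcf^1$ is likewise fork-free in the dual ``inverse'' direction.

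Restrict attention to middle-level sets: with $k = 2\sqrt{n\log n}$, Lemma \ref{logn} makes $|\mcf\setminus\mcf_k|$ negligible compared to $\Sigma(n,2)$. Apply Lemma \ref{sizeforklemma} (with $s=2$) to $\mcf^3\cap\mcf_k$ and to the complement family of $\mcf^1\cap\mcf_k$. For a chain $c$ through $\mcf_k$, let $A$ be the smallest $\mcf^3$-element on $c$ and $B$ the largest $\mcf^1$-element on $c$ (either may be absent). The fork-free conditions give conditional expected Lubell contributions of at most $1+q(n-|A|)$ from $\mcf^3$ and at most $1+q(|B|)$ from $\mcf^1$.

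The crucial step uses Lemma \ref{qlemma} iii). In the generic case $|A|\le|B|$ on the chain, we have $(n-|A|)+|B|\ge n\ge 13$, so $q(n-|A|)+q(|B|)\le 1$. Summing per chain and carefully accounting for chains meeting only one of the subfamilies should then yield $\lambda(\mcf_k)\le 2$ exactly. Lemma \ref{lub} with $x=2$, $y=0$ then gives $|\mcf_k|\le\Sigma(n,2)$, and the tail outside $\mcf_k$ is absorbed into the $O(\binom{n}{\lfloor n/2\rfloor}/n^{3/2})$ slack.

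The main obstacle is carrying out this chain-wise combination tightly enough that the two $O(1/n)$ errors from the separate applications of Lemma \ref{sizeforklemma} genuinely cancel (rather than leaving a residual $2+O(1/n)$ bound, which would only give $|\mcf|\le\Sigma(n,2)(1+O(1/n))$). In particular, chains with $|A|>|B|$, where Lemma \ref{qlemma} iii) does not apply in the obvious direction, and chains meeting only one of $\mcf^1$, $\mcf^3$ will need separate treatment, perhaps invoking Lemma \ref{qlemma} ii) as a fallback. The coincidence of the threshold $n\ge 13$ in the statement with the threshold in Lemma \ref{qlemma} iii) strongly indicates that this combination is exactly where the improvement from an approximate to an exact bound is made.
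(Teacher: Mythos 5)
There is a genuine gap, in two places. First, the overall assembly cannot give the exact bound: once you restrict to the middle levels $\mcf_k$ and invoke Lemma \ref{logn}, you incur an additive loss of order $\binom{n}{\lfloor n/2\rfloor}n^{-3/2}$ for $|\mcf\setminus\mcf_k|$, and the theorem has no slack to absorb it --- the claimed bound is exactly $\Sigma(n,2)$, not $\Sigma(n,2)+O(\binom{n}{\lfloor n/2\rfloor}n^{-3/2})$. Any proof of the exact statement must control the whole family; the paper does this by first disposing of $\emptyset,[n]\in\mcf$ (via Remark \ref{vremark}) and then proving $\lambda(\mcf)\le 2$ for all of $\mcf$, so that Lemma \ref{lub} with $x=2$, $y=0$ applies with no error term. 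Similarly, the approximate bounds $1+\frac{2(s-1)}{n}+O(kn^{-2})$ from Lemma \ref{sizeforklemma} can never be sharpened to an exact constant just by adding them; you correctly sense this, but the proposed cancellation is not carried out.

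Second, the chain-wise combination you sketch breaks at its key step. Note that an element $F\in\mcf^3$ (containing two same-size members $A,B$ of $\mcf$) can have \emph{no} superset in $\mcf$ at all: any $G\supsetneq F$, $G\in\mcf$ gives the forbidden quadruple $A,B\subset F,G$ with $|A|=|B|$. Hence each chain meets $\mcf^3$ at most once, and if a chain contains both an $\mcf^3$-element $A$ and an $\mcf^1$-element $B$, then necessarily $B\subsetneq A$. So your ``generic case $|A|\le|B|$'' is vacuous, and in the only case that actually occurs one has $(n-|A|)+|B|=n-(|A|-|B|)<n$, which can drop below $13$, so Lemma \ref{qlemma} iii) is not applicable as claimed; the fallback Lemma \ref{qlemma} ii) only yields averages like $2+q(|B|)$, which exceed $2$. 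The structural fact the exact bound needs is two-sided and attached to a single set: for every $F$ that has both a subset and a superset in $\mcf$, its subsets in $\mcf$ have pairwise distinct sizes \emph{and} its supersets in $\mcf$ have pairwise distinct sizes, giving an average of at most $1+q(|F|)+q(n-|F|)\le 2$ over the chains through $F$ (this is where $n\ge 13$ and Lemma \ref{qlemma} iii) enter, with $|F|+(n-|F|)=n$). The paper then partitions the chains by the smallest such ``middle'' member $F$ they contain, and needs one further observation --- chains through $F$ that are not assigned to $F$'s class meet $\mcf$ at least twice --- to transfer the bound from all chains through $F$ to $F$'s class. Your one-sided partition of $\mcf$ into $\mcf^1\sqcup\mcf^3$ discards the superset-side constraint on $\mcf^1$-elements and cannot reach the constant $2$ without recovering essentially this two-sided argument; as written, the proposal identifies the right tools (chain partitions, Lemma \ref{qlemma} iii), Lemma \ref{lub}) but the decisive step is missing.
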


\begin{proof}
Let $\mcf$ be such a family. Assume that $\emptyset\in\mcf$. Then there are no three subsets in $\mcf\backslash\{\emptyset\}$ satisfying $B\subset C$, $B\subset D$ and $|C|=|D|$. Using Remark \ref{vremark} and that $n$ is large enough, we get
$$|\mcf|\le\binom{n}{\lfloor\frac{n}{2}\rfloor}+\frac{2}{3}\binom{n}{\lfloor\frac{n}{2}\rfloor+1}+1+1\le\Sigma(n,2).$$
If $[n]\in\mcf$, then consider the family of the complements of the sets in $\mcf$. It also satisfies the conditions of the theorem, and contains $\emptyset$, so $|\mcf|\le\Sigma(n,2)$ holds in this case too. From now on, we will assume that $\emptyset, [n]\not\in\mcf$.

We will prove that $\lambda(\mcf)\le 2$, then Lemma \ref{lub} (with $x=2$, $y=0$) will imply $|\mcf|\le \Sigma(n,2)$.

Let $\mcg$ denote the set of all members of $\mcf$ that contain an other member of $\mcf$ and are also contained in an other member of $\mcf$. Let $\mcc$ denote the set of all chains. Let $\mcc_0$ denote the set of all chains not containing any member of $\mcg$. For any set $F\in\mcg$, let $\mcc_F$ denote the set of all chains that are passing through $F$, and $F$ is the smallest member of $\mcg$ in them. In this way, the collections of chains $\mcc_0$ and $\mcc_F$ ($F\in\mcg$) form a partition of $\mcc$. It means that
\begin{equation}\label{parteq}|\mcc_0|+\sum_{F\in\mcg}|\mcc_F|=|\mcc|=n!.\end{equation}

Obviously, the chains in $\mcc_0$ contain at most two elements of $\mcf$, so
\begin{equation}\label{c0eq}\sum_{c\in\mcc_0} |c\cap \mcf|\le 2|\mcc_0|.\end{equation}

Let $F\in\mcg$. Let $\mcs_F$ denote the set of all chains passing through $F$ (so $|\mcs_F|=|F|!(n-|F|)!$ and $\mcc_F\subset \mcs_F$). Note that $F$ can not be contained in two sets from $\mcf$ of the same size, since they would form a forbidden configuration together with $F$ and one of its subsets from $\mcf$ ($F$ has a subset like that, since $F\in\mcg$). If $F\subset G$, then $G$ appears in $|F|!(|G|-|F|)!(n-|G|)!=\binom{n-|F|}{|G|-|F|}^{-1}|\mcs_F|$ chains of $\mcs_F$.

Similarly, $F$ can not contain two sets from $\mcf$ of the same size, since they would form a forbidden configuration together with $F$ and one of sets from $\mcf$ that contain $F$. If $G\subset F$, then $G$ appears in $|G|!(|F|-|G|)!(n-|F|)!=\binom{|F|}{|G|}^{-1}|\mcs_F|$ chains from $\mcs_F$. It follows from Lemma \ref{qlemma} iii) that
\begin{equation}\label{mcsineq}\sum_{c\in\mcs_F} |c\cap \mcf|\le (1+q(n-|F|)+q(|F|))|\mcs_F|\le 2|\mcs_F|.\end{equation}

If $C\in\mcs_F\backslash\mcc_F$ then $C$ contains at least two members of $\mcg$. Therefore
\begin{equation}\label{mcscineq}\sum_{c\in\mcs_F\backslash\mcc_F} |c\cap \mcf|\ge 2|\mcs_F\backslash\mcc_F|.\end{equation}

From the inequalities (\ref{mcsineq}) and (\ref{mcscineq}) it follows that
\begin{equation}\label{cfeq}\sum_{c\in\mcc_F}|c\cap \mcf|=\sum_{c\in\mcs_F} |c\cap \mcf|-\sum_{c\in\mcs_F\backslash\mcc_F} |c\cap \mcf|\le 2|\mcs_F|-2|\mcs_F\backslash\mcc_F|=2|\mcc_F|.\end{equation}

Using (\ref{c0eq}), (\ref{cfeq}) and (\ref{parteq}) we get that
$$\sum_{c\in\mcc} |c\cap\mcf|=\sum_{c\in\mcc_0} |c\cap \mcf|+\sum_{F\in\mcg}\left(\sum_{c\in\mcc_F}|c\cap \mcf|\right)\le 2|\mcc_0|+\sum_{F\in\mcg}2|\mcc_F|=2|\mcc|.$$

Using (\ref{lubeq}) we get
$$\lambda(\mcf)=\underset{c\in \mcc}{ave}(|c\cap\mcf|)\le 2,$$
which completes the proof.
\end{proof}

\begin{theorem}({\bf Li} \cite{lithesis})
Let $\mathcal{J}$ denote the poset that consists of 4 elements $A,B,C$ and $D$ such that $A\subset B\subset D$ and $A\subset C$. Then
$La(n,\mathcal{J})\le\Sigma(n,2).$
\end{theorem}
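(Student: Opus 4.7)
The plan is to prove $\lambda(\mcf)\le 2$ and then appeal to Lemma~\ref{lub} (with $x=2$, $y=0$) to conclude $|\mcf|\le \Sigma(n,2)$. The argument will be a chain-partition in the spirit of Theorem~\ref{sizebthm}, but the decisive inequality comes from a size-based estimate rather than from the symmetry exploited in the butterfly proof. Before launching into the main argument, I dispatch the two boundary situations. If $[n]\in\mcf$, then $\mcf\setminus\{[n]\}$ must be $V$-free: two distinct $\mcf\setminus\{[n]\}$-supersets $C_1,C_2$ of some common $B$ would combine with $[n]$ to form $\mathcal{J}$, so $|\mcf|\le La(n,V)+1\le\Sigma(n,2)$ using the De~Bonis-Katona bound (the $s=2$ case of the theorem preceding Theorem~\ref{sizefork}). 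The case $\emptyset\in\mcf$ follows directly from the structural observation below. From now on I assume $\emptyset,[n]\notin\mcf$.

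The key structural observation is that for every $A\in\mcf$, the family $\mcf^A:=\{F\in\mcf:A\subsetneq F\}$ of strict $\mcf$-supersets of $A$ is either an antichain or has $|\mcf^A|\le 2$: a $2$-chain $X\subsetneq Y$ inside $\mcf^A$ together with any third element of $\mcf^A$ completes a copy of $\mathcal{J}$ with $A$ at the bottom. A second application of this observation shows that if $\mcf^A=\{X,Y\}$ with $X\subsetneq Y$, then $A$ is necessarily minimal in $\mcf$: any would-be $\mcf$-subset $A'\subsetneq A$ would satisfy $\mcf^{A'}\supseteq\{A,X,Y\}$, which contains the $2$-chain $A\subsetneq X$ and the extra element $Y$.

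Partition the $n!$ maximal chains of $[n]$ into $\mcc_0$ (chains disjoint from $\mcf$) and $\mcc_A$ (chains whose smallest $\mcf$-element is $A$) for each $A\in\mcf$. The goal is the local inequality $\sum_{c\in\mcc_A}|c\cap\mcf|\le 2|\mcc_A|$ for every $A$; summing it yields $\sum_c |c\cap\mcf|\le 2(n!-|\mcc_0|)\le 2n!$, i.e.\ $\lambda(\mcf)\le 2$. When $\mcf^A$ is empty, a singleton, or an antichain, every $c\in\mcc_A$ contains $A$ and at most one further $\mcf$-element, so $|c\cap\mcf|\le 2$ pointwise and the local bound comes for free. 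The only non-trivial case, and the main obstacle, is $\mcf^A=\{X,Y\}$ with $X\subsetneq Y$.

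In that case $A$ is minimal in $\mcf$, so $\mcc_A$ coincides with the set $\mcs_A$ of all chains through $A$, and a direct count (exactly as in the proof of Theorem~\ref{sizebthm}) gives
\[
\sum_{c\in\mcs_A}|c\cap\mcf|\;=\;|\mcs_A|\left(1+\binom{n-|A|}{|X|-|A|}^{-1}+\binom{n-|A|}{|Y|-|A|}^{-1}\right).
\]
The assumption $\emptyset,[n]\notin\mcf$ forces $1\le |A|<|X|<|Y|\le n-1$, so both indices in the binomials lie strictly between $0$ and $n-|A|$ and each reciprocal is at most $\binom{n-|A|}{1}^{-1}=1/(n-|A|)$; and the chain $A\subsetneq X\subsetneq Y\subsetneq[n]$ forces $n-|A|\ge 3$, so the two reciprocals sum to at most $2/3$ and the whole expression is bounded by $\tfrac{5}{3}|\mcs_A|<2|\mcs_A|$, as required. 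If $Y=[n]$ were permitted the second reciprocal would equal $1$ and this estimate would fail, which is precisely why the boundary cases have to be disposed of in advance via the $V$-free detour.
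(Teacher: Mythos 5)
Your argument is correct in substance, and it is a genuinely different (and more elementary) route than anything in the paper: the paper does not prove Li's bound directly at all, but strengthens it to Theorem~\ref{jthm} (where additionally $|B|=|C|$ is forced), and there the simple Lubell bound breaks down --- as the paper notes, $\lambda(\mcf)\le 2$ can fail under the size-restricted hypothesis, which is why Theorem~\ref{jthm} needs the weight $w(F)=\binom{n}{|F|}$ and the inductive estimate of Lemma~\ref{jlemma} on $S(F)$. Your proof exploits exactly the extra strength of the unrestricted hypothesis: the dichotomy that $\mcf^A$ is either an antichain or a two-element chain (in which case $A$ is minimal) is false in the size-restricted setting, but here it reduces everything to the single computation over $\mcs_A$ with the two reciprocals $\binom{n-|A|}{|X|-|A|}^{-1}+\binom{n-|A|}{|Y|-|A|}^{-1}\le \tfrac{2}{3}$, giving $\lambda(\mcf)\le 2$ and then $\Sigma(n,2)$ via Lemma~\ref{lub}. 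So what your approach buys is a short chain-partition proof of the cited theorem; what it cannot give is the paper's stronger Theorem~\ref{jthm}, for which the weighted method is genuinely needed.

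One caveat: your disposal of the case $[n]\in\mcf$ leans on the asymptotic De~Bonis--Katona bound, whose unspecified $O(1/n^2)$ term only yields $La(n,V)+1\le\Sigma(n,2)$ for $n$ large, so as written the theorem is not established for all $n$. This is easily repaired with the paper's own non-asymptotic version, Remark~\ref{vremark} with $s=2$: if $[n]\in\mcf$ then $\mcf\setminus\{[n]\}$ is $V$-free, hence $|\mcf|\le \binom{n}{\lfloor n/2\rfloor}+\tfrac{2}{3}\binom{n}{\lfloor n/2\rfloor+1}+2\le\Sigma(n,2)$ once $\binom{n}{\lfloor n/2\rfloor+1}\ge 6$, with the remaining tiny values of $n$ checked by hand (this is precisely how the paper handles the analogous step inside the proof of Theorem~\ref{jthm}). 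The rest of your argument --- the structural dichotomy, the identification $\mcc_A=\mcs_A$ in the two-chain case, and the $\tfrac{5}{3}|\mcs_A|$ estimate using $n-|A|\ge 3$ --- is sound as stated.
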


(See \cite{Lbounded} for a general theorem about fan posets, containing the above theorem as a special case.)

We prove that the same bound holds even if the forbidden configuration contains an additional requirement of two sets having the same size.

\begin{theorem}\label{jthm}
Let $\mcf$ be a family of subsets of $[n]$. Assume that there are no 4 different subsets $A,B,C$ and $D$ in $\mcf$ such that $A\subset B\subset D$, $A\subset C$ and $|B|=|C|$.
Then $|\mcf|\le \Sigma(n,2)$.
\end{theorem}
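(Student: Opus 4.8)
The plan is to follow the chain-counting strategy of the proof of Theorem~\ref{sizebthm}, but the asymmetry of $\mathcal{J}$ forces a cardinality-level refinement. First I would record two structural facts. Write $\mcf_{\max}$ for the maximal members of $\mcf$ (those with no proper superset in $\mcf$) and $\mcf_{\mathrm{low}}=\mcf\setminus\mcf_{\max}$. Two comparable maximal sets are impossible, so $\mcf_{\max}$ is an antichain and every chain meets it at most once. More importantly, $\mcf_{\mathrm{low}}$ contains no same-size fork: if $A\subset B$, $A\subset C$ with $B,C\in\mcf_{\mathrm{low}}$, $B\neq C$ and $|B|=|C|$, then $B$, being non-maximal, has a proper superset $D\in\mcf$, and $A\subset B\subset D$, $A\subset C$ is exactly a forbidden configuration. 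Thus $\mcf_{\mathrm{low}}$ is $V_2$-free in the size-restricted sense, and the sharper coupling also holds: if a set has a non-maximal superset of some size $\ell$, that is its \emph{only} superset of size $\ell$ (any second same-size superset would serve as $C$).

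Next I would dispose of $\emptyset$ and $[n]$ separately, noting that (unlike for the butterfly) the forbidden configuration is \emph{not} self-complementary, so these two cases are genuinely different. If $[n]\in\mcf$, then taking $D=[n]$ shows $\mcf\setminus\{[n]\}$ is entirely same-size-fork-free, so Theorem~\ref{sizefork} with $s=2$ bounds it below $\Sigma(n,2)$; if $\emptyset\in\mcf$, then taking $A=\emptyset$ forces the non-maximal members to have pairwise distinct sizes, so $|\mcf_{\mathrm{low}}|\le n+1$ and $|\mcf|\le\binom{n}{\lfloor n/2\rfloor}+n+1$, which quickly yields the claim. Hence I may assume $\emptyset,[n]\notin\mcf$. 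For the core I would partition all chains by their smallest member $A$ of $\mcf$, as in Lemma~\ref{sizeforklemma}. On the block with minimum $A$ the number of members met is $1$ plus the number of supersets of $A$ met on the chain; the maximal supersets of $A$ form an antichain (contributing at most $1$ to the average), while by the coupling the non-maximal supersets occur at most once per size (contributing at most $q(n-|A|)$). So each block averages at most $2+q(n-|A|)$, giving $\lambda(\mcf)\le 2+\tfrac23$ by Lemma~\ref{qlemma}.

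The main obstacle is that this surplus is real and not an artifact: $\lambda(\mcf)$ can genuinely exceed $2$ (for example all singletons, one fixed pair, and all $\lfloor n/2\rfloor$-sets is $\mathcal{J}$-free yet has $\lambda>2$), so one \emph{cannot} finish by invoking Lemma~\ref{lub} with $\lambda\le 2$, as was done for Theorem~\ref{sizebthm}. The saving feature is that the offending surplus is carried entirely by non-maximal supersets lying at sizes away from the peak, where the binomial coefficients are small, so it never converts into excess cardinality near $\Sigma(n,2)$. The crux of the proof is therefore to trade the $q(n-|A|)$ surplus against a matching deficit in the antichain $\mcf_{\max}$: by the coupling, each size at which some set has a genuine (non-maximal) superset is a size at which the maximal antichain above that set must be absent, so the gain in $Q_A$ is compensated by a loss in the antichain-hitting probability. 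I expect the bulk of the work to lie in making this compensation exact, so as to recover $\Sigma(n,2)$ for every $n$ with no error term, rather than the asymptotic $\Sigma(n,2)+O(n^{-1})\binom{n}{\lfloor n/2\rfloor}$ that the chain partition produces directly.
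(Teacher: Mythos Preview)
Your initial setup is sound and closely parallels the paper's: partition chains by their smallest member $A$ of $\mcf$; observe that above $A$ the family $\mcr_A=\{G\in\mcf:A\subsetneq G\}$ has the property that any non-maximal element is size-unique; and recognise that $\lambda(\mcf)$ can genuinely exceed $2$, so Lemma~\ref{lub} cannot be invoked with $x=2$. The gap is in your proposed resolution. The heuristic you give---that whenever level $\ell$ carries a non-maximal superset of $A$, the maximal antichain above $A$ avoids level $\ell$, and this absence forces a drop in the antichain's hitting probability offsetting the surplus---does not hold as stated. The first implication is correct, but an antichain can avoid arbitrarily many levels and still meet every chain through $A$ with probability $1$ (for instance all $m$-sets containing $A$ occupy a single level). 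So there is no automatic deficit on the antichain side to cancel the $q(n-|A|)$ surplus, and you have supplied no alternative mechanism; what remains is exactly the hard part of the theorem, which you yourself flag as ``the bulk of the work''.

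The paper escapes this by abandoning the unweighted Lubell count. Rather than averaging $|c\cap\mcf|$ (which yields $\lambda(\mcf)$ and then loses information when converted to cardinality), it averages the weighted sum $\sum_{F\in c\cap\mcf}\binom{n}{|F|}$, which by~(\ref{weightingeq}) equals $|\mcf|$ exactly; the task then becomes bounding each block average by $\Sigma(n,2)$ directly. The key technical step is an inductive bound (Lemma~\ref{jlemma}) on the weighted contribution $S(A)$ of $\mcr_A$: stepping up one level from $A$, either at most one element of $\mcr_A$ appears there (giving a controlled additive term and allowing recursion), or at least two appear---and then, by precisely the structural fact you identified, none of them lies below any further element of $\mcr_A$, so their $S$-values vanish. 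This dichotomy, propagated inductively in the weighted language, is what replaces your hoped-for compensation and makes the bound exact for every $n$.
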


\begin{proof}
It is enough to prove the theorem for even values of $n$, as it follows from $n=2m$ to $n=2m+1$. To see this, assume that we already proved it for even values, and consider an odd $n$. Let $\mcf$ be a sets of subsets of $[n]$, satisfying the conditions of the theorem. Then let
$$\mcf^- = \{F ~|~ F\in\mcf,~1\not\in F\},$$
$$\mcf^+ = \{F\backslash\{1\} ~|~F\in\mcf,~ 1\in F\}.$$

Then $\mcf^-$ and $\mcf^+$ are both families of subsets of $[n-1]$, satisfying the conditions of the theorem. Since $n-1$ is even, their sizes are at most $\Sigma(n-1,2)$. Therefore
$$|\mcf|=|\mcf^-|+|\mcf^+|\le 2\Sigma(n-1,2)=2\left(\binom{n-1}{\frac{n-1}{2}}+\binom{n-1}{\frac{n-1}{2}-1}\right)=2\binom{n}{\frac{n-1}{2}}=
\Sigma(n,2).$$

From now on, we will assume that $n$ is even, and use the notation $m=\frac{n}{2}$. We may also assume that $n\ge 4$, since the statement is trivial for $n=2$.

Assume that $[n]\in\mcf$. Then $\mcf\backslash {[n]}$ contains no three sets such that $A\subset B$, $A\subset C$ and $|B|=|C|$. Remark \ref{vremark} (with $s=2$) and $n\ge 4$ implies that
$$|\mcf|\le\binom{n}{m}+\left\lfloor\frac{2}{3}\binom{n}{m+1}\right\rfloor+1+1\le\Sigma(n,2).$$
From now on, we will assume that $[n]\not\in\mcf$.

In the case of this theorem, $\lambda(\mcf)\le 2$ is not always true, so it is not possible to prove the required bound using the Lubell function. We need a more precise approach. For a set $F\in \mcf$ let $w(F)=\binom{n}{|F|}$ denote the weight of $F$. If $\mcc$ denotes the set of all chains of $[n]$, then
\begin{equation}\label{weightingeq}
\underset{c\in \mcc}{ave}\left(\sum_{F\in c\cap\mcf} w(F)\right)=\frac{1}{n!}\sum_{F\in\mcf} w(F)|F|!(n-|F|)!=\frac{1}{n!}\sum_{F\in\mcf} n!=|\mcf|.\end{equation}

It means that we can give an upper bound to $\mcf$ by analysing the quantity $\underset{c\in \mcc}{ave}\left(\displaystyle\sum_{F\in c\cap\mcf} w(F)\right)$. The following lemma is the key to the proof.

\begin{lemma}\label{jlemma}
Assume that $n=2m$. Let $F$ be a proper subset of $[n]$, and let $\mcd_F$ denote the set of chains between $F$ and $[n]$. Assume that $\mcr$ is a family of subsets of $[n]$ such that there are no 3 different sets $B,C$ and $D$ in $\mcr$ such that $B\subset D$ and $|B|=|C|$. Let us use the notation
$$S(F)=\underset{c\in \mcd_F}{ave}\left(\sum_{\substack{X\in c\cap\mcr \\ F\subsetneq X\subsetneq [n] }} w(X)\right)=\frac{1}{(n-|F|)!}\sum_{c\in \mcd_F}\left(\sum_{\substack{X\in c\cap\mcr \\ F\subsetneq X\subsetneq [n] }} w(X)\right).$$
Then the following inequalities hold:
\begin{enumerate}[i)]
\item If $|F|\ge m-1$, then
$$S(F)\le \binom{n}{|F|+1}.$$
\item If $|F|\le m-1$, then
$$S(F)\le\binom{n}{m}+\sum_{i=|F|+1}^{m-1} \frac{1}{n-i+1}\binom{n}{i}.$$
\end{enumerate}
\end{lemma}

\begin{proof}
The statement is trivially true for $|F|=n-1$ and $|F|=n-2$. For $m-1\le|F|\le n-3$, we will prove the statement by induction, decreasing $|F|$ by 1 at every step.

Let $m-1\le|F|\le n-3$, and assume that we already proved the lemma for the greater values of $|F|$. Let $A_1, A_2, \dots A_{n-|F|}$ be the sets of size $|F|+1$ that contain $F$. We will investigate the families of chains that pass through the sets $A_i$ individually. Let us use the notation $|\{A_1, A_2,\dots, A_{n-|F|}\}\cap \mcr|=N$. Then

\begin{equation}\label{sfformula}S(F)=\frac{N}{n-|F|}\binom{n}{|F|+1}+\frac{1}{n-|F|}\sum_{i=1}^{n-|F|} S(A_i).\end{equation}

By induction, $S(A_i)\le\binom{n}{|F|+2}$ for all $i$. If $A_i, A_j\in\mcr$, then there can't be any set of $\mcr$ that contains $A_i$, so $S(A_i)=S(A_j)=0$. Using these two observations and (\ref{sfformula}), we give an upper bound to $S(F)$.

If $N=0$, then
$$S(F)\le \frac{1}{n-|F|}(n-|F|)\binom{n}{|F|+2}=\binom{n}{|F|+2} \le \binom{n}{|F|+1}.$$

If $N=1$, then
$$S(F)\le\frac{1}{n-|F|}\binom{n}{|F|+1}+\binom{n}{|F|+2}\le \binom{n}{|F|+1}.$$
(The second inequality follows easily from $m-1\le|F|$.)

If $N\ge 2$, then
$$S(F)\le \frac{N}{n-|F|}\binom{n}{|F|+1}+\frac{1}{n-|F|}(n-|F|-N)\binom{n}{|F|+2}\le \binom{n}{|F|+1}.$$

This proves  part i). Now we move on the the proof of part ii). We will use induction again, decreasing $|F|$ by one at every step. The case $|F|=m-1$ was proved already in part i). (The summation is empty in this case.) Assume that $0\le |F|\le m-2$. We can use the same observations as before. The value of $S(A_i)$ can be estimated by induction for all $i$. Additionally, $S(A_i)=S(A_j)=0$, if $i\not=j$ and $A_i,A_j\in\mcr$.

If $N\le 1$, then
$$S(F)\le\frac{1}{n-|F|}\binom{n}{|F|+1}+\binom{n}{m}+\sum_{i=|F|+2}^{m-1} \frac{1}{n-i+1}\binom{n}{i}=
\binom{n}{m}+\sum_{i=|F|+1}^{m-1} \frac{1}{n-i+1}\binom{n}{i}.$$

Now assume that $N\ge 2$. Then $N$ of the values $S(A_i)$ are 0, and the others are at most $$\binom{n}{m}+\displaystyle\sum_{i=|F|+2}^{m-1} \frac{1}{n-i+1}\binom{n}{i}$$ by induction. So
$$S(F)\le \frac{N}{n-|F|}\binom{n}{|F|+1}+\frac{n-|F|-N}{n-|F|} \left(\binom{n}{m}+\sum_{i=|F|+2}^{m-1} \frac{1}{n-i+1}\binom{n}{i}\right).$$

Using the obvious inequality
$$\max\left(\binom{n}{|F|+1},~\binom{n}{m}+\sum_{i=|F|+2}^{m-1} \frac{1}{n-i+1}\binom{n}{i}\right)\le \binom{n}{m}+\sum_{i=|F|+1}^{m-1} \frac{1}{n-i+1}\binom{n}{i},$$

we get that
$$S(F)\le \binom{n}{m}+\sum_{i=|F|+1}^{m-1} \frac{1}{n-i+1}\binom{n}{i}.$$

This completes the proof of part ii).
\end{proof}

Now we continue the proof of Theorem \ref{jthm}. We want to show that
$$\underset{c\in \mcc}{ave}\left(\sum_{F\in c\cap\mcf} w(F)\right)\le \Sigma(n,2).$$
Then (\ref{weightingeq}) will imply $|\mcf|\le\Sigma(n,2)$.

We define a partition of $\mcc$ (the set of all chains). For all sets $A\in\mcf$, let $\mcc_A$ denote the family of chains that pass through $A$, and $A$ is the smallest element of $\mcf$ in them. Additionally, let $\mcc_0$ denote the family of chains that avoid $\mcf$.

Obviously
$$\underset{c\in \mcc_0}{ave}\left(\sum_{F\in c\cap\mcf} w(F)\right)=0.$$

We want to show that
$$\underset{c\in \mcc_A}{ave}\left(\sum_{F\in c\cap\mcf} w(F)\right)\le\Sigma(n,2)$$
for all groups $\mcc_A$.

Let $\mcr=\{G\in\mcf ~|~A\subsetneq G \}$. Then there are no 3 different sets $B,C$ and $D$ in $\mcr$ such that $B\subset D$ and $|B|=|C|$. (Otherwise they would form a forbidden configuration with $A$.) By the definition of $\mcc_A$, its chains do not not contain any sets smaller than $A$. The number of chains in $\mcc_A$ passing through a set $G\in\mcr$ is proportional to the number of chains between $F$ and $[n]$, passing through $G$. Therefore, using the notation from Lemma \ref{jlemma}, we have
$$\underset{c\in \mcc_A}{ave}\left(\sum_{F\in c\cap\mcr} w(F)\right)=S(A).$$

Since all chains of $\mcc_A$ contain $A$,
$$\underset{c\in \mcc_A}{ave}\left(\sum_{F\in c\cap\mcf} w(F)\right)=w(A)+\underset{c\in \mcc_A}{ave}\left(\sum_{F\in c\cap\mcr} w(F)\right)=\binom{n}{|A|}+S(A).$$

If $m-1\le |A|$, then Lemma \ref{jlemma} i) implies
$$\underset{c\in \mcc_A}{ave}\left(\sum_{F\in c\cap\mcf} w(F)\right)\le\binom{n}{|A|}+\binom{n}{|A|+1}\le\Sigma(n,2).$$

Now let $|A|\le m-2$. Lemma \ref{jlemma} ii) implies
$$\underset{c\in \mcc_A}{ave}\left(\sum_{F\in c\cap\mcf} w(F)\right)\le\binom{n}{|A|}+\binom{n}{m}+\sum_{i=|A|+1}^{m-1} \frac{1}{n-i+1}\binom{n}{i}.$$

We have to prove that
$$\binom{n}{|A|}+\binom{n}{m}+\sum_{i=|A|+1}^{m-1} \frac{1}{n-i+1}\binom{n}{i}\le\Sigma(n,2)=\binom{n}{m}+\binom{n}{m-1}.$$

Note that if $i\le m-1$, then
$$\frac{1}{n-i+1}\binom{n}{i}=\frac{n!}{i!(n-i+1)!}\le\frac{n!}{(m-1)!(m+2)!}.$$

So it suffices to prove
$$\binom{n}{|A|}+\binom{n}{m}+\frac{(m-|A|-1)n!}{(m-1)!(m+2)!}\le \binom{n}{m}+\binom{n}{m-1}.$$

After subtracting $\binom{n}{m}$ from both sides and dividing by $n!$, we get
$$\frac{1}{|A|!(n-|A|)!}+\frac{(m-|A|-1)}{(m-1)!(m+2)!}\le \frac{1}{(m-1)!(m+1)!}.$$

After further rearranging, it becomes
$$(m-1)!(m+2)!\le (|A|+3)|A|!(n-|A|)!.$$

Obviously $|A|+1<|A|+3$, so it suffices to prove
$$(m-1)!(m+2)!\le (|A|+1)!(n-|A|)!,$$
or equivalently
$$\binom{n+1}{m-1}^{-1}\le\binom{n+1}{|A|+1}^{-1}.$$

This is true, since $|A|+1\le m-1 <\frac {n+1}{2}$ implies $\binom{n+1}{|A|+1}\le\binom{n+1}{m-1}$.

With this, we proved that $\Sigma(n,2)$ is an upper bound to the average total weight of the intersection of $\mcf$ with a random chain from any $\mcc_A$. Therefore this bound also applies when we consider $\mcc$, since
$$\underset{c\in \mcc}{ave}\left(\sum_{F\in c\cap\mcf} w(F)\right)\le
\max\left(\underset{c\in \mcc_0}{ave}\left(\sum_{F\in c\cap\mcf} w(F)\right),~\max_{A\in\mcf}\underset{c\in \mcc_A}{ave}\left(\sum_{F\in c\cap\mcf} w(F)\right) \right)
\le\Sigma(n,2).$$

Then (\ref{weightingeq}) implies
$$|\mcf|=\underset{c\in \mcc}{ave}\left(\sum_{F\in c\cap\mcf} w(F)\right)\le\Sigma(n,2).$$
\end{proof}

Our last theorem in this section will be about diamond posets.

\begin{notation}
The diamond poset $D_m$ consists of $m+2$ elements such that $A < B_1,~B_2,\dots,B_m < C$.
\end{notation}

The following theorem exactly determines the value of La($n,D_m$) for infinitely many values of $m$. However, for infinitely many values (including $m=2$) it is unknown. (See \cite{groszdia} for the current best bound for $m=2$.)

\begin{theorem}({\bf Griggs-Li-Lu} \cite{diamond})\label{diamondthm}
Let $n,m\geq 2$, and let $t=\lceil\log_2 (m+2)\rceil$.

If $2^{t-1}-1\le m\le 2^t-{t \choose \lfloor t/2 \rfloor} -1$, then
$${\rm La}(n,D_m)=\Sigma(n,t).$$

If $2^t-{t \choose \lfloor t/2 \rfloor}\le m\le 2^t-2$, then
$$\Sigma(n,t)\leq {\rm La}(n,D_m)\leq  \left( t+1-\frac{2^t-m-1}{{t \choose \lfloor t/2 \rfloor}} \right)
 {n \choose \lfloor n/2 \rfloor}.$$
\end{theorem}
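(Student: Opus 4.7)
The plan is to combine an explicit construction for the lower bound with a partition-style chain count for the upper bound.

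\textbf{Lower bound.} I would take $\mcf$ to be the union of the $t$ middle levels of $2^{[n]}$, which has size exactly $\Sigma(n,t)$. To verify that $\mcf$ is $D_m$-free, observe that if $A \subsetneq B_1,\dots,B_m \subsetneq C$ all lie in $\mcf$, then $|C|-|A|\le t-1$, so the Boolean interval $[A,C]$ contains only $2^{|C|-|A|}-2 \le 2^{t-1}-2$ strict interior sets. Since both cases of the theorem assume $m \ge 2^{t-1}-1$, there are not enough candidates for the $B_i$'s, so the construction succeeds and gives the lower bound in both cases.

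\textbf{Upper bound.} The strategy is to show $\lambda(\mcf)\le t$ in Case~1 (respectively $\lambda(\mcf)\le t+1-(2^t-m-1)/\binom{t}{\lfloor t/2\rfloor}$ in Case~2) and then apply Lemma \ref{lub} with $x=t$, $y=0$ (respectively $y$ equal to the fractional excess). A direct Lubell count is too weak, because a $D_m$-free family can still contain chains of length larger than $t$. Instead I would use the partition method: partition the set $\mcc$ of maximal chains into classes $\mcc_A$ indexed by the smallest member of $\mcf$ on the chain, together with a class $\mcc_0$ of chains disjoint from $\mcf$ (which contributes zero). Within each $\mcc_A$, the family $\mcf \cap [A,[n]]$ is itself $D_m$-free, so one can control, layer by layer, how many elements of $\mcf$ sit at a given rank above $A$: the LYM inequality bounds each antichain of $\mcf$ above $A$, and the diamond constraint further restricts how these antichains can stack. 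An induction on the rank of $A$ then gives $\mathrm{ave}_{c\in\mcc_A}(|c\cap\mcf|)\le t$ (respectively $t+y$), and averaging over the classes yields the desired bound on $\lambda(\mcf)$.

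\textbf{Main obstacle.} The delicate step is the sharp averaging inside each class $\mcc_A$. The quantity $\binom{t}{\lfloor t/2\rfloor}$ appears naturally as the maximum antichain size in a Boolean lattice of rank $t$, and the threshold $m=2^t-\binom{t}{\lfloor t/2\rfloor}-1$ marks exactly the value of $m$ at which a family above $A$ modelled on $t-1$ consecutive middle levels of a smaller Boolean lattice just barely fits under the $D_m$-constraint. Capturing this extremal configuration precisely---so that the coefficient of $\binom{n}{\lfloor n/2\rfloor}$ is exactly $t$ in Case~1 rather than $t+\varepsilon$---is where the bulk of the combinatorial work has to go, and is precisely what separates Cases~1 and 2 in the statement.
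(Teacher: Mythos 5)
First, note that the paper itself does not prove this statement: it is quoted from Griggs--Li--Lu \cite{diamond}, so there is no internal proof to compare against, and your attempt has to stand on its own. Your lower bound is correct and complete: the $t$ middle levels have size $\Sigma(n,t)$, any $A\subsetneq C$ inside them satisfy $|C|-|A|\le t-1$, so the open interval $(A,C)$ contains at most $2^{t-1}-2$ sets, while in both cases $m\ge 2^{t-1}-1$ (indeed $2^t-\binom{t}{\lfloor t/2\rfloor}\ge 2^{t-1}-1$), so no copy of $D_m$ fits. Also, the reduction you propose for the upper bound -- prove $\lambda(\mcf)\le t$ in Case 1 and $\lambda(\mcf)\le t+1-(2^t-m-1)/\binom{t}{\lfloor t/2\rfloor}$ in Case 2, then apply Lemma \ref{lub} (resp.\ Lemma \ref{lub2}) -- is the right frame and matches the strategy of the original paper.

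The gap is that the upper bound is never actually proved: everything after ``within each $\mcc_A$'' is a description of what a proof would have to accomplish, and your own ``Main obstacle'' paragraph concedes that the decisive step is missing. Concretely, once you condition on $A$ being the minimal member of $\mcf$ on a chain, the relevant consequence of $D_m$-freeness is that no member of $\mcf$ strictly above $A$ may contain $m$ other members of $\mcf$ strictly above $A$ (the $B_i$ in an embedding need not form an antichain, so ``LYM bounds each antichain above $A$'' is not the right reduction and by itself cannot see the thresholds in the statement). Turning that condition into the sharp averages $t$ and $t+1-(2^t-m-1)/\binom{t}{\lfloor t/2\rfloor}$ is exactly the interval-counting analysis of Griggs--Li--Lu, where the quantities $2^{|C|-|A|}-2$ and $\binom{t}{\lfloor t/2\rfloor}$ are balanced against the Lubell weights level by level; no such computation, nor even a precise statement of the induction hypothesis on the rank of $A$, appears in your argument. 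The delicacy is real: for $m$ just outside Case 1 (e.g.\ $m=2$) the analogous averaging claim fails to give the conjectured truth, which is why Case 2 only yields a constant-factor bound, so the inequality $\lambda(\mcf)\le t$ in Case 1 genuinely requires the detailed argument you have deferred rather than a routine LYM-plus-induction step. As it stands, the proposal establishes the lower bound and a correct plan, but not the theorem.
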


Roughly speaking, this theorem tells us that
$$\textrm{La}(n,D_m)=(\log_2 m+O(1)){n \choose \lfloor n/2 \rfloor}.$$

Now we prove that if the forbidden configuration includes that the middle elements must have the same size, the upper bound to the size of the family increases only by a constant factor.

\begin{theorem} \label{sizediathm}
Let $\mcf$ be a family of subsets of $[n]$, and $m\ge 2$. Assume that there are no $m+2$ different subsets $A,~B_1,~B_2,\dots B_m,~C\in\mcf$ such that $A\subset B_i\subset C$ for all $1\le i\le m$ and $|B_1|=|B_2|=\dots=|B_m|$. Then $|\mcf|\le 3(\lceil\log_3 (m-1)\rceil+1)\cdot\binom{n}{\lfloor\frac{n}{2}\rfloor}$.
\end{theorem}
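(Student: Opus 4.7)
The strategy is to bound the Lubell function by $\lambda(\mcf)\le 3(\lceil\log_3(m-1)\rceil+1)$ and then apply Lemma \ref{lub2}. I would proceed by induction on $m$, combined with the chain-partition method used throughout the paper.

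For the base case $m=2$, the forbidden configuration is exactly four sets $A\subsetneq B_1,B_2\subsetneq C$ with $|B_1|=|B_2|$; equivalently, for every $A\subsetneq C$ in $\mcf$ the members of $\mcf$ strictly between $A$ and $C$ form a chain. I would show $\lambda(\mcf)\le 3$ by imitating the proof of Theorem \ref{sizebthm}: let $\mcg\subset\mcf$ be the ``middle'' members (those with both a proper subset and a proper superset in $\mcf$), partition $\mcc$ into the class $\mcc_0$ of chains avoiding $\mcg$ and the classes $\mcc_F$ ($F\in\mcg$) of chains whose smallest $\mcg$-element is $F$, verify $|c\cap\mcf|\le 2$ on $\mcc_0$, and use Lemma \ref{qlemma}~iii) in both directions around $F$ to control the average on $\mcc_F$.

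For the inductive step I would partition $\mcf=\mcf'\cup\mcf''$ by a heaviness threshold. For each $F\in\mcf$ define $\mu(F)$ as the maximum, over all sandwiches $A\subsetneq F\subsetneq C$ with $A,C\in\mcf$, of $|\{B\in\mcf:A\subsetneq B\subsetneq C,\;|B|=|F|\}|$, setting $\mu(F)=0$ if no such sandwich exists. The hypothesis gives $\mu(F)\le m-1$. Put $F\in\mcf''$ if $\mu(F)\le\lceil(m-1)/3\rceil$, and otherwise put $F\in\mcf'$. A short check shows $\mcf''$ avoids $D_{m'}$ with size restriction for $m'=\lceil(m-1)/3\rceil+1$: any middle elements $B_1,\dots,B_{m'}$ of such a configuration inside $\mcf''$ would have $\mu(B_i)\ge m'>\lceil(m-1)/3\rceil$, contradicting $B_i\in\mcf''$. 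The induction hypothesis then yields $\lambda(\mcf'')\le 3\lceil\log_3(m-1)\rceil$.

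What remains -- and what I expect to be the hard part -- is to show $\lambda(\mcf')\le 3$. The heavy subfamily $\mcf'$ is not automatically $D_2$-avoiding with size restriction, so the base case does not immediately apply; however, every $F\in\mcf'$ is trapped inside some sandwich of $\mcf$ with at least $\lceil(m-1)/3\rceil+1\ge 2$ same-size siblings, which forces a rigid local structure. My plan is to again use the chain-partition method on $\mcf'$, adapting the proof of the base case by choosing the partitioning subfamily inside $\mcf'$ so that the sandwiches witnessing heaviness cooperate with the chain classification, and bounding the contribution of each class using Lemma \ref{qlemma}~iii). If this direct partition turns out not to be tight, an alternative is to refine the definition of $\mu(F)$ -- for instance, restricting to sandwiches whose endpoints themselves lie in $\mcf'$ -- so that $\mcf'$ literally inherits a $D_2$-avoidance condition and the base case applies verbatim. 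Either way, combining the two bounds gives $\lambda(\mcf)\le 3+3\lceil\log_3(m-1)\rceil=3(\lceil\log_3(m-1)\rceil+1)$, closing the induction; the factor 3 rather than 2 in the leading constant reflects precisely the slack that appears when the sandwich endpoints $A,C$ may fall on either side of the partition.
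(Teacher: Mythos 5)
Your reduction of the light part is fine (with the threshold $\lceil(m-1)/3\rceil$ the family $\mcf''$ indeed avoids the size-restricted $D_{m'}$ and the logarithm drops by one), but the step you defer --- $\lambda(\mcf')\le 3$ --- is not merely hard, it is false for your definition of heaviness, so the induction cannot close in this form. Take $\mcf$ to be all subsets of $[n]$ lying in $r$ consecutive middle levels, with $r$ maximal such that $\binom{r}{\lfloor r/2\rfloor}\le m-1$ (this is exactly the construction in the remark following Theorem \ref{sizediathm}, and it satisfies the hypothesis). For a set $F$ at distance $j$ from the lowest level of the band, the widest sandwich (bottom level to top level) shows $\mu(F)=\binom{r-1}{j}$, so $F$ is heavy precisely when $\binom{r-1}{j}>\lceil(m-1)/3\rceil$. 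Central binomial coefficients decay slowly, so this holds for about $c\sqrt{r}$ consecutive values of $j$: the heavy part consists of roughly $c\sqrt{\log_2 m}$ \emph{entire} levels, and $\lambda(\mcf')$ grows without bound. Concretely, for $r=12$ and $m-1=\binom{12}{6}=924$ the threshold is $308$, and $\binom{11}{j}>308$ for $j=4,5,6,7$, so $\mcf'$ is four full levels and $\lambda(\mcf')\approx 4>3$. The same example shows the tension in your fallback: with your $\mu$, the heavy part is the \emph{central} slab (which both carries unbounded Lubell mass and contains size-restricted copies of $D_2$, since it has several consecutive full levels), while shrinking $\mcf'$ until it is $D_2$-free pushes those central levels into $\mcf''$, which then contains sandwiches with about $(m-1)/2>\lceil(m-1)/3\rceil$ same-size middle members, killing the inductive hypothesis for $\mcf''$. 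So the two demands of your scheme pull in opposite directions on this family.

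The base case is also not established by the sketch. The two structural facts powering Theorem \ref{sizebthm} --- that a member of $\mcg$ can neither contain nor be contained in two same-size members of $\mcf$ --- use that the butterfly-with-size-restriction forbids such pairs outright; your $m=2$ configuration additionally requires a common top (resp.\ bottom) element in $\mcf$, so a set with one subset and one superset in $\mcf$ may still lie below arbitrarily many same-size members of $\mcf$, and Lemma \ref{qlemma} iii) has nothing to act on. The bound $\lambda(\mcf)\le 3$ for $m=2$ is true, but it is the $K=1$ case of the paper's argument, which needs no induction on $m$ at all: partition the chains according to the \emph{pair} $(F,G)$ of the smallest and largest member of $\mcf$ on the chain; since at most $m-1$ members of $\mcf$ of each size lie strictly between $F$ and $G$, the average of $|c\cap\mcf|$ over $\mcc_{FG}$ is at most $2+\sum_{i=1}^{N-1}\min\bigl((m-1)/\binom{N}{i},\,1\bigr)$ with $N=|G|-|F|$, and this is at most $3K$ (the first and last $K$ terms give at most $2K$, each middle term is at most $K/N$); then Lemma \ref{lub2} finishes. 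If you want to salvage a recursive argument, you would need a split in which the part with bounded Lubell function is a thin extremal slice (e.g.\ outermost layers of the relevant intervals) rather than the heavy central one, which is a genuinely different decomposition from the one you propose.
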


\begin{proof}
Let us use the notation $K=\lceil\log_3 (m-1)\rceil+1$. We will prove that $\lambda(\mcf)\le 3K$, then Lemma \ref{lub2} will imply the statement of the theorem.

Now we partition $\mcc$ (the set of all chains of $[n]$) into some sets. Let $F,G\in\mcf$ be two sets such that $F\subsetneq G$. Let $\mcc_{FG}$ denote the set of chains whose smallest intersection with $\mcf$ is $F$ and the largest one is $G$. Let $\mcc_0$ denote the set of chains that contain at most 1 element of $\mcf$. Then every chain is in exactly one of these sets.

We will prove that in every $\mcc_{FG}$ (and also in $\mcc_0$), the chains contain at most $3K$ elements of $\mcf$ on average. Then all chains contain at most $3K$ elements of $\mcf$ on average, in other words $\lambda(\mcf)\le 3K$. This is obviously true for $\mcc_0$, since its chains contain at most 1 element of $\mcf$.

Now let $F,G\in\mcf$ be two sets such that $F\subset G$. Assume $\mcc_{FG}$ is not empty, and consider the chains in it. These chains pass through $F$ and $G$ and possibly some sets that contain $F$ and are contained in $G$. For any $|F|<k<|G|$ there are $\binom{|G|-|F|}{k-|F|}$ sets satisfying $F\subset X\subset G$, but at most $m-1$ of them can be in $\mcf$, otherwise we would get a forbidden configuration. Therefore the average number of sets from $\mcf$ contained in the chains of $\mcc_{FG}$ is at most
\begin{equation}\label{Dmeq}
\underset{c\in \mcc_{FG}}{ave}(|c\cap\mcf|)\le 2+\sum_{i=1}^{|G|-|F|-1} \min\left(\frac{m-1}{\binom{|G|-|F|}{i}},1\right).
\end{equation}
After introducing the notation $N=|G|-|F|$ and moving the 2 inside the summation it becomes
$$\underset{c\in \mcc_{FG}}{ave}(|c\cap\mcf|)\le \sum_{i=0}^{N} \min\left(\frac{m-1}{\binom{N}{i}},1\right).$$
There are $N+1$ terms and all of them are at most 1, so the sum is at most $N+1$. This fact finishes the proof when $N<3K$. From now on, we will assume that $N\ge 3K$.

The sum of the first $K$ and the last $K$ summands is obviously at most $2K$. We will show that the rest of the terms are sufficiently small. Assume that $K\le i\le N-K$. Then
$$\frac{m-1}{\binom{N}{i}}\le \frac{m-1}{\binom{N}{K}}\le
\frac{m-1}{(\frac{N}{K})^{K}}=\frac{K}{N}\cdot\frac{m-1}{(\frac{N}{K})^{K-1}}\le\frac{K}{N}\cdot\frac{m-1}{3^{\log_3 (m-1)}}=\frac{K}{N}.$$

So the sum of the middle terms is at most $(N+1-2K)\cdot\frac{K}{N}\le K$. Therefore
$$\underset{c\in \mcc_{FG}}{ave}(|c\cap\mcf|)\le 3K.$$

Since this holds for all $\mcf_{FG}$ and also for $\mcc_0$, we get
$$\lambda(\mcf)=\underset{c\in \mcc}{ave}(|c\cap\mcf|)\le 3K.$$
\end{proof}

\begin{theorem}
Let $m=4$ and $n\ge 3$ in the above theorem. Then $|\mcf|\le\Sigma(n,4)$ and this bound is the best possible.
\end{theorem}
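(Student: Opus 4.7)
The plan is to specialize the partition argument of Theorem \ref{sizediathm} to $m=4$ and pair it with the sharper constant from Lemma \ref{qlemma}~ii), so as to push $\lambda(\mcf)\le 4$ rather than the $\lambda(\mcf)\le 3K=6$ that the general argument would yield. For the matching construction, I would take $\mcf$ to consist of all subsets of $[n]$ whose size is among the four sizes giving the largest binomial coefficients; this family has $|\mcf|=\Sigma(n,4)$, and its sizes span a range of at most $3$, so any comparable $A\subsetneq C$ in $\mcf$ satisfy $|C|-|A|\le 3$ and hence $\binom{|C|-|A|}{|B|-|A|}\le 3$ for every intermediate size. Since a forbidden configuration requires four equisized middle sets, no such configuration appears, and the construction is extremal.

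For the upper bound I would follow the partition used in the proof of Theorem \ref{sizediathm}: let $\mcc_0$ be the chains meeting $\mcf$ in at most one set, and for each pair $F\subsetneq G$ in $\mcf$ let $\mcc_{FG}$ be those chains whose smallest and largest $\mcf$-elements are $F$ and $G$. The $\mcc_0$ contribution to $\lambda(\mcf)$ is trivially at most $1$, so it suffices to show
$$\underset{c\in\mcc_{FG}}{ave}(|c\cap\mcf|)\le 4$$
for every such pair. Writing $N=|G|-|F|$, at most $m-1=3$ elements of $\mcf$ can lie between $F$ and $G$ at any fixed intermediate size, so as in (\ref{Dmeq}) the average is bounded by $2+\sum_{i=1}^{N-1}\min\bigl(3/\binom{N}{i},\,1\bigr)$. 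The task reduces to showing this sum is at most $2$: for $N=1,2,3$ it equals $0,1,2$ by direct computation, and for $N\ge 4$ every intermediate binomial exceeds $3$, so the sum equals $3\,q(N)\le 3\cdot\frac{2}{3}=2$ by Lemma \ref{qlemma}~ii). Having shown $\lambda(\mcf)\le 4$, Lemma \ref{lub} (applied with $x=4$, $y=0$, i.e. the contrapositive direction of its proof) yields $|\mcf|\le\Sigma(n,4)$.

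The main point to get right is the borderline cases $N=3$ and $N=4$, where the bound of $2$ on the sum is attained with equality: the crude estimate $q(k)<4/k$ from Lemma \ref{qlemma}~i) only yields $3\,q(4)<3$, which is not strong enough. One must really use the exact fact $q(3)=q(4)=\frac{2}{3}$ recorded in Lemma \ref{qlemma}~ii); this is precisely why the argument collapses the slack factor of $K$ from Theorem \ref{sizediathm} down to a tight constant. There is otherwise no substantive obstacle; the whole proof is a direct specialization of the Griggs--Li--Lu partition argument to the finer numerical bound enabled by $m=4$.
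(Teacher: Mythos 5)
Your proof is correct and follows essentially the same route as the paper: the same chain partition into $\mcc_0$ and the classes $\mcc_{FG}$, the same bound $2+\sum_i\min\bigl(3/\binom{N}{i},1\bigr)\le 2+3\,q(N)\le 4$ via Lemma \ref{qlemma}~ii), Lemma \ref{lub} with $x=4$, $y=0$, and the four-middle-levels construction for sharpness. The only difference is cosmetic: your separate treatment of $N\le 3$ is unnecessary, since bounding each $\min$ term by $3/\binom{N}{i}$ gives $3\,q(N)\le 2$ uniformly for all $N\ge 2$, exactly as in the paper.
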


\begin{proof}
Consider formula $(\ref{Dmeq})$. We will give an elementary upper bound using Lemma \ref{qlemma} ii).
$$\underset{c\in \mcc_{FG}}{ave}(|c\cap\mcf|)\le 2+\sum_{i=1}^{N-1} \min\left(\frac{m-1}{\binom{N}{i}},1\right)\le 2+(m-1)\cdot q(N)\le 2+\frac{2}{3}(m-1)=4.$$
This leads to
$$\lambda(\mcf)=\underset{c\in \mcc}{ave}(|c\cap\mcf|)\le 4,$$
and Lemma \ref{lub} implies
$$|\mcf|\le\Sigma(n,4).$$
It is easy to see that the forbidden configuration does not appear is the family that consists of all subsets of the 4 middle levels, therefore the bound is the best possible.
\end{proof}

\begin{remark}
So far, the results in the size restricted problems were equal or almost equal to their counterparts without size restrictions. However, this is not true for diamond posets.

The answer found in the above theorem is different from the one for the same problem without size restrictions. Substituting $m=4$ to Theorem \ref{diamondthm}, we get that the best possible bound is
$$|\mcf|\le\Sigma(n,3).$$

For general $m$, Theorem \ref{diamondthm} implies that the answer is
$$\textrm{La}(n,D_m)=\Sigma(n, \log_2 m+O(1))$$
in the simple case. In the size restricted case, Theorem \ref{sizediathm} gives the upper bound
$$|\mcf|\le 3(\lceil\log_3 (m-1)\rceil+1)\cdot\binom{n}{\lfloor\frac{n}{2}\rfloor}.$$
Now we construct a large family $|\mcf|$ that does not contain $D_m$ with size restrictions. Let $r$ be the largest integer such that $\binom{r}{\lfloor\frac{r}{2}\rfloor}<m$, and let $\mcf$ consist of all subsets of $[n]$ in the $r$ middle levels. Using Stirling's formula, it follows that $r=\log_2 m+O(\log_2 \log_2 m)$, therefore
$$|\mcf|=\Sigma(n,r)=\Sigma(n, \log_2 m+O(\log_2 \log_2 m)).$$
\end{remark}

\section{A general bound}

In this section we prove a general theorem about forbidden poset problems with size restrictions. It was motivated by the following result about induced subposets.

\begin{theorem}({\bf Methuku-P\'alv\"{o}lgyi}, \cite{indgeneral})
For every finite poset $\mcp$, there exists a constant $C$ such that
$$La^\star(n,\mcp)\le C\binom{n}{\lfloor\frac{n}{2}\rfloor}.$$
\end{theorem}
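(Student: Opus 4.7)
The plan is to reduce the problem to one canonical poset and then induct on its size. Every finite poset $\mcp$ with $k = |\mcp|$ elements embeds as an induced subposet of the Boolean lattice $B_k = 2^{[k]}$ through the principal down-set map $\iota: p \mapsto \{q \in \mcp : q \le_{\mcp} p\}$: the forward implication $p \le_{\mcp} q \Rightarrow \iota(p) \subseteq \iota(q)$ is transitivity, and the converse holds because $p \in \iota(p)$, so $\iota(p) \subseteq \iota(q)$ forces $p \le_{\mcp} q$. Composing $\iota$ with any induced embedding $B_k \hookrightarrow \mcf$ produces an induced embedding $\mcp \hookrightarrow \mcf$, so it suffices to prove the theorem for $\mcp = B_k$ for each $k \ge 1$.

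I would then proceed by induction on $k$. The base case $k=1$ is Sperner's theorem: avoiding induced $B_1$ means $\mcf$ is an antichain, so $|\mcf| \le \binom{n}{\lfloor n/2 \rfloor}$, giving $C_1 = 1$. For the inductive step, assume $La^\star(n, B_{k-1}) \le C_{k-1}\binom{n}{\lfloor n/2 \rfloor}$ and let $\mcf$ avoid induced $B_k$. Split $[n] = X \sqcup Y$ with $|X| = |Y|$ and, for each $X$-profile $S \subseteq X$, set $\mcf_S = \{F \cap Y : F \in \mcf,\, F \cap X = S\} \subseteq 2^Y$. Two members of $\mcf$ sharing the same $X$-profile $S$ are comparable in $2^{[n]}$ exactly when their $Y$-parts are comparable in $2^Y$, so any induced $B_{k-1}$ in $\mcf_S$ lifts verbatim to an induced $B_{k-1}$ in $\mcf$ whose members agree on $X$. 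A Lubell-weighted averaging over profiles should produce some $S$ with $|\mcf_S|$ above the induction threshold; by induction this yields an induced $B_{k-1}$ in $\mcf$. To promote it to an induced $B_k$, one applies the same mechanism to the sets of $\mcf$ not used so far to locate an additional $T \in \mcf$ lying strictly above (or below) all $2^{k-1}$ members of the found $B_{k-1}$, and verifies that the $2^k$ sets together form an induced $B_k$.

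The main obstacle, and the heart of the Methuku--Pálvölgyi argument, is quantitative. The naive profile pigeonhole spreads $\mcf$ across up to $2^{|X|} = 2^{n/2}$ classes, which alone cannot guarantee a class of size $\Omega\!\left(\binom{|Y|}{\lfloor|Y|/2\rfloor}\right)$. The resolution is to replace the crude pigeonhole by a Ramsey-type theorem for $0/1$ hypermatrices: encode each $F \in \mcf$ as a cell in a hypermatrix whose axes are indexed by a fine partition $[n] = I_1 \sqcup \cdots \sqcup I_r$ with $r = r(k)$ blocks, and invoke a density theorem that extracts a combinatorially structured submatrix corresponding directly to an induced copy of $B_k$. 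The delicate point I would spend most effort on is matching the "structure" produced by such a hypermatrix Ramsey theorem to the requirement that the resulting copy of $B_k$ be \emph{induced} (rather than merely embedded) -- this is exactly where the hypothesis that $\mcf$ avoids an induced $B_k$ enters sharply, and where the substantive technical work of the proof lives.
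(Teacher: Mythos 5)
First, note that the paper does not prove this statement: it is quoted from Methuku--P\'alv\"olgyi \cite{indgeneral}, whose actual argument runs through a Marcus--Tardos-type theorem on pattern-avoiding multidimensional $0/1$ matrices. So your proposal has to stand on its own, and as written it has a genuine gap. Your reduction to $\mcp=B_k$ via the principal down-set map is correct (an induced copy of $B_k$ contains an induced copy of any $k$-element poset, so $La^\star(n,\mcp)\le La^\star(n,B_k)$), and the base case is Sperner. But the inductive step fails structurally: $B_k$ is not ``$B_{k-1}$ plus one element above everything'' --- it is two copies of $B_{k-1}$ joined by a perfect matching of cover relations, and your own count gives $2^{k-1}+1\neq 2^k$ sets. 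Finding a single $T\in\mcf$ above the located copy of $B_{k-1}$ (even granting you could, which is itself unjustified, since the remaining sets need not relate to that copy at all) does not produce $B_k$. What the induction would actually require is two copies of $B_{k-1}$ living over two comparable $X$-profiles $S\subsetneq S'$, aligned pairwise so that exactly the matched pairs are comparable and no unwanted inclusions occur between the two copies; no mechanism for this alignment, or for certifying the non-inclusions needed for inducedness, is given.

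Two further points. The obstacle you identify in your last paragraph is misdiagnosed: the plain pigeonhole over the $2^{n/2}$ profiles is in fact fine, since $2^{n/2}\binom{n/2}{\lfloor n/4\rfloor}=\Theta\bigl(\binom{n}{\lfloor n/2\rfloor}\bigr)$, so some $\mcf_S$ has size at least a constant times $\binom{n/2}{\lfloor n/4\rfloor}$ with only a bounded loss per halving; the real difficulty is the cross-profile assembly just described. And your proposed cure --- ``invoke a density theorem for $0/1$ hypermatrices'' --- is not a repair but a placeholder: that density/Ramsey statement, the encoding of $\mcf$ by interval traces, and above all the translation of the extracted submatrix structure into an \emph{induced} copy of $\mcp$ constitute essentially the whole content of the Methuku--P\'alv\"olgyi proof, and you explicitly leave that translation open. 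As it stands, the proposal establishes the reduction to $B_k$ and nothing beyond it.
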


Now let us add size restrictions instead of the induced property. We prove that the bound $C\binom{n}{\lfloor\frac{n}{2}\rfloor}$ applies in this case too. (The theorems are independent, neither one implies the other.)

\begin{theorem}\label{genthm}
Let $\mcp$ be a finite poset. The elements of $\mcp$ are colored with the colors $1, 2,\dots, k$. (Each element has exactly one color and all colors are used.) Assume that the coloring is order-preserving. (If $a<_p b$, then $a$'s color is smaller than $b$'s color.) Then there exists a constant $C$ (depending on $\mcp$ and its coloring) such that for any family $\mcf$ of subsets of $[n]$ satisfying $|\mcf|>C\binom{n}{\lfloor\frac{n}{2}\rfloor}$, there is an embedding $f:\mcp\rightarrow\mcf$ that maps all elements of the same color into sets of the same size.
\end{theorem}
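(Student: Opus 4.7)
The plan is to induct on $|\mcp|$. The base case $|\mcp|=1$ is immediate with $C=1$: any nonempty $\mcf$ contains a set to serve as the image. For the inductive step, pick a minimal element $p^* \in \mcp$ of color $c^*$, set $\mcp' = \mcp \setminus \{p^*\}$ with the inherited coloring, and let $C' = C(\mcp')$ be the constant supplied by induction. The idea is to use the chain-partition method of Theorems \ref{sizebthm} and \ref{jthm} to locate a set $F_0 \in \mcf$ that will serve as $f(p^*)$, such that the residual family above $F_0$ is rich enough to embed $\mcp'$ by induction.

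Concretely, partition the $n!$ chains of $[n]$ into $\mcc_0$ (chains disjoint from $\mcf$) and $\mcc_F$ for $F \in \mcf$ (chains whose smallest $\mcf$-element is $F$). A computation analogous to those in the proofs of Theorems \ref{sizebthm} and \ref{jthm} yields
$$\underset{c \in \mcc_F}{ave}(|c\cap\mcf|)=1+\lambda_{[n]\setminus F}(\mcf[F]),$$
where $\mcf[F]=\{G\setminus F : G\in\mcf,\ G\supsetneq F\}$ is viewed as a family in the reduced ground set $[n]\setminus F$, and $\lambda_{[n]\setminus F}$ denotes its Lubell function there. Combined with $\lambda(\mcf) > C$ (which follows from $|\mcf| > C\binom{n}{\lfloor n/2\rfloor}$ by Lemma \ref{lub2}) and $\sum_F |\mcc_F| \le n!$, weighted averaging produces an $F_0 \in \mcf$ with $\lambda_{[n]\setminus F_0}(\mcf[F_0]) > C - 1$. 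In the clean sub-case where $c^*$ is a color used only by $p^*$, setting $f(p^*) = F_0$ and $f(q) := F_0 \cup f'(q)$ from a color-consistent embedding $f'\colon \mcp' \to \mcf[F_0]$ supplied by the induction hypothesis in the ground set $[n]\setminus F_0$ yields the required embedding: sizes on $\mcp'$ shift uniformly by $|F_0|$, so color consistency is preserved; and since $c^*$ is absent from $\mcp'$ there is no collision at size $|F_0|$.

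Two obstacles remain. First, when every color class of $\mcp$ has at least two elements, the element-by-element peeling above forces a size collision, and we must instead peel the entire bottom color class $M_1 = \{m_1,\dots,m_t\}$ at once, locating $t$ distinct sets $F_1,\dots,F_t \in \mcf$ of a common size $s_1$ together with a color-consistent embedding of $\mcp \setminus M_1$ in the restricted family $\{G\in\mcf : G \supseteq F_1\cup\cdots\cup F_t\}$, viewed in the ground set $[n] \setminus (F_1\cup\cdots\cup F_t)$. Second, the induction hypothesis is phrased in terms of cardinality whereas the chain-partition only delivers a Lubell-function lower bound; bridging this requires using Lemma \ref{logn} to separate the middle-level and extreme-level contributions of $\mcf[F_0]$, so that a lower bound on middle-level Lubell mass translates back into a cardinality bound. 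I expect the main technical work to lie in the multi-element color class case, which needs a refined chain-partition simultaneously controlling the number of $\mcf$-sets of each size and the Lubell mass of the sub-family above the union of the $F_i$'s, in the spirit of the multi-level partition used in Theorem \ref{restrbaton}.
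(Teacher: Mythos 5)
There is a genuine gap here, and it sits exactly where the difficulty of the theorem lies. Your target configuration is the right one (a fully nested tower of monochromatic same-size antichains, which then embeds any order-preservingly colored poset), but the proposed induction on $|\mcp|$ does not deliver it. First, the generic case --- every color class having at least two elements --- is only announced as ``the main technical work'' and never carried out; producing $t$ distinct sets $F_1,\dots,F_t\in\mcf$ of a common size together with a rich family above them is essentially the size-restricted diamond problem (Theorem \ref{sizediathm}), which your argument never invokes or reproves. Second, the Lubell-to-cardinality bridge fails: after passing to the reduced ground set $[n]\setminus F_0$, the conclusion $\lambda_{[n]\setminus F_0}(\mcf[F_0])>C-1$ gives no lower bound of the form $c\binom{n-|F_0|}{\lfloor (n-|F_0|)/2\rfloor}$ on $|\mcf[F_0]|$ --- a single maximal chain in $[n]\setminus F_0$ already has Lubell mass $n-|F_0|+1$ while having only $n-|F_0|+1$ members --- so the cardinality-phrased induction hypothesis cannot be applied. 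Lemma \ref{logn} does not help: it bounds the total size of the extreme levels of the cube, and does not convert a Lubell lower bound into a cardinality lower bound. To make your scheme work you would have to rephrase the entire induction in terms of the Lubell function (or of chain densities), which is a different and unproven statement, not a routine patch.

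For comparison, the paper's proof avoids both issues by never leaving the ground set $[n]$ and by inducting on the number of colors $k$ rather than on $|\mcp|$ (Lemma \ref{genlemma}). At each step it peels the \emph{top} color class at the level of the family: let $\mcg$ be the set of $G\in\mcf$ admitting $Q_1,\dots,Q_{a_k},T\in\mcf$ with $G\subset Q_i\subset T$ and all $|Q_i|$ equal; then $\mcf\setminus\mcg$ avoids the size-restricted diamond $D_{a_k}$, so by Theorem \ref{sizediathm} it has size at most $C'\binom{n}{\lfloor n/2\rfloor}$, hence $\mcg$ is still large in the cardinality sense and the induction hypothesis applies directly to $\mcg$ with colors $1,\dots,k-1$; finally the top set $Z$ of the configuration found in $\mcg$ is exchanged for the cap $T$ of its own diamond, with the $Q_j$ forming the $k$-th level. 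The base case $k=1$ is Theorem \ref{sizediathm} itself. If you want to salvage your bottom-up peeling, the realistic route is to import this mechanism: define the analogous subfamily of sets lying \emph{above} a monochromatic same-size antichain with a common bottom, bound its complement by (the complement-symmetric form of) Theorem \ref{sizediathm}, and keep all estimates as cardinality bounds in $[n]$ rather than Lubell bounds in a shrunken ground set.
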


The above theorem is an easy consequence of the following lemma.

\begin{lemma}\label{genlemma}
Let $a_1, a_2, \dots a_k$ be given positive integers. Then there exist a constant $C(a_1, a_2,\dots a_k)$ such that the following holds for every $n\in\mathbb{N}$ and family $\mcf$ of subsets of $n$ satisfying $|\mcf|>C(a_1, a_2,\dots a_k)\binom{n}{\lfloor\frac{n}{2}\rfloor}$.

One can find $2+\displaystyle\sum_{i=1}^k a_i$ different sets called $X, Y_1^1, Y_2^1,\dots Y_{a_1}^1, Y_1^2,\dots Y_{a_k}^k$ and $Z$ in $\mcf$ such that
\begin{enumerate}[i)]
\item $|Y_1^i|=|Y_2^i|=\dots=|Y_{a_i}^i|$ for all $1\le i\le k$.
\item If $1\le i_1<i_2\le k$, then $Y_{j_1}^{i_1}\subset Y_{j_2}^{i_2}$ for all $1\le j_1\le a_{i_1}$, $1\le j_2\le a_{i_2}$.
\item $X\subset A_j^1$ for all $1\le j\le a_1$.
\item $A_j^k\subset Z$ for all $1\le j\le a_k$.
\end{enumerate}
\end{lemma}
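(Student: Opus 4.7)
My plan is to prove Lemma \ref{genlemma} by induction on $k$.

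For the base case $k=1$, the configuration $X\subsetneq Y^1_1,\dots,Y^1_{a_1}\subsetneq Z$ with $|Y^1_1|=\dots=|Y^1_{a_1}|$ is exactly the forbidden structure in Theorem \ref{sizediathm} with $m=a_1$. Taking $C(a_1):=3(\lceil\log_3(a_1-1)\rceil+1)$ works for $a_1\geq 2$; the case $a_1=1$ reduces to finding a 3-chain, for which $C(1)=2$ suffices by Erd\H{o}s's theorem.

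For the inductive step, assume the lemma for $k-1$ with constant $C':=C(a_2,\dots,a_k)$ and let $\mcf$ satisfy $|\mcf|>C\binom{n}{\lfloor n/2\rfloor}$, with $C$ to be chosen large in terms of $C'$ and $a_1,\dots,a_k$. The first step is to extract the bottom layers of the configuration by applying Theorem \ref{sizediathm} (with $m=a_1$) to locate a size-restricted diamond $X\subsetneq Y^1_1,\dots,Y^1_{a_1}\subsetneq W$ in $\mcf$. Writing $U:=\bigcup_j Y^1_j$, I would then pass to the upper cone $\mcf^{\uparrow}:=\{F\in\mcf:F\supseteq U\}$ and apply the inductive hypothesis to the translated family $\{F\setminus U:F\in\mcf^{\uparrow}\}\subseteq 2^{[n]\setminus U}$ to produce a $(a_2,\dots,a_k)$-configuration $\tilde X,\tilde Y^1_j,\dots,\tilde Y^{k-1}_j,\tilde Z$. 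Setting $Y^{i+1}_j:=\tilde Y^i_j\cup U$ for $i=1,\dots,k-1$ and $Z:=\tilde Z\cup U$ then completes the $(a_1,\dots,a_k)$-configuration, with $\tilde X\cup U$ discarded as an extra set sitting between the $Y^1$- and $Y^2$-layers.

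The main obstacle is that the inductive hypothesis requires $|\mcf^{\uparrow}|>C'\binom{n-|U|}{\lfloor(n-|U|)/2\rfloor}$, which a single invocation of Theorem \ref{sizediathm} does not guarantee, since the diamond it produces may sit in a sparse part of $\mcf$. My plan to overcome this is to take $C$ much larger than $C'$ and run a double-counting argument: when $|\mcf|$ far exceeds $\binom{n}{\lfloor n/2\rfloor}$, the family contains enough size-restricted bottom diamonds that, on average, one of them must have an upper cone heavy enough for the induction. One also needs $n-|U|$ to be not too small (at least of order $(C')^2$) for the target inequality to be satisfiable at all; this can be arranged by first restricting to sets in $\mcf$ of size at most $n/2$, losing a factor of at most $2$ in $|\mcf|$, or by running the symmetric top-down version of the argument on the upper half. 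Quantifying all this yields the desired constant $C(a_1,\dots,a_k)$, growing roughly multiplicatively in $C'$, and completes the induction.
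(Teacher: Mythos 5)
Your base case and the overall ``peel off one level and induct'' structure are fine, but the inductive step has a genuine gap, and it is exactly at the point you flag yourself. After locating a single size-restricted diamond $X\subsetneq Y^1_1,\dots,Y^1_{a_1}\subsetneq W$ and setting $U=\bigcup_j Y^1_j$, nothing guarantees that the upper cone $\{F\in\mcf : F\supseteq U\}$ contains more than the one set $W$; Theorem \ref{sizediathm} only produces \emph{some} diamond, and it may sit in a part of $\mcf$ whose cone is empty. Your proposed fix --- take $C$ much larger than $C'$ and run a double-counting argument showing that ``on average'' some diamond has a heavy cone --- is only a plan, not a proof, and it is the hard part: the density threshold you need, $C'\binom{n-|U|}{\lfloor (n-|U|)/2\rfloor}$, changes drastically with $|U|$, you must count size-restricted diamonds (not just sets) against cone weights, and you must rule out the possibility that all diamonds concentrate where the cones are sparse. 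None of this is quantified, and the auxiliary reductions you mention (restricting to sets of size at most $n/2$, or running a symmetric top-down argument) are also left unexecuted. As written, the induction does not close.

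The paper avoids this difficulty entirely and never leaves the ground set $[n]$. It defines $\mcg\subseteq\mcf$ to be the family of all $G$ for which there exist $Q_1,\dots,Q_{a_k},T\in\mcf$ with $|Q_1|=\dots=|Q_{a_k}|$ and $G\subset Q_i\subset T$; then $\mcf\setminus\mcg$ contains no size-restricted diamond $D_{a_k}$ (the bottom of such a diamond would lie in $\mcg$), so $|\mcf\setminus\mcg|\le C'\binom{n}{\lfloor n/2\rfloor}$ by Theorem \ref{sizediathm}. Taking $C(a_1,\dots,a_k)=C'+C(a_1,\dots,a_{k-1})$ makes $\mcg$ large enough to apply the inductive hypothesis (for $a_1,\dots,a_{k-1}$) directly to $\mcg$ inside $[n]$, and since the top set $Z$ of the resulting configuration lies in $\mcg$, its defining property supplies the $a_k$ equal-size sets and the new top $T$ above it. In other words, instead of committing to one diamond and hoping its cone is dense, one shows that \emph{almost every} set of $\mcf$ has the required top structure above it and inducts on that subfamily; this gives an additive recursion for the constant and requires no density-transfer or averaging argument. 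If you want to salvage your version, you would need to carry out the incidence count rigorously, handle the dependence of the threshold on $|U|$, and control the case of large $|U|$; adopting the paper's ``mark the good bottoms'' decomposition is substantially simpler.
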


\begin{proof}
The lemma will be proved by induction on $k$. If $k=1$, then the statement directly follows from Theorem \ref{sizediathm}, since we are looking for $a_k+2$ sets forming a diamond poset with size restrictions. Now assume that $k\ge 2$ and we already proved the lemma for smaller values of $k$.

Let $\mcg$ denote the set of those sets $G\in\mcf$ for which there are $a_k+1$ another members of $\mcf$ (named $Q_1, Q_2\dots Q_{a_k}$ and $T$) such that $|Q_1|=|Q_2|=\dots=|Q_{a_k}|$ and $G\subset Q_i\subset T$ for all $1\le i\le a_k$.

Then the diamond poset $D_{a_k}$ can not be embedded into $\mcf\backslash\mcg$ in a way that the middle elements are mapped into sets of the same size. (Otherwise the set corresponding to the bottom element of the diamond would belong in $\mcg$.) Theorem \ref{sizediathm} implies that $|\mcf\backslash\mcg|\le C'\binom{n}{\lfloor\frac{n}{2}\rfloor}$, where $C'$ depends only on $a_k$.

Let $C(a_1, a_2,\dots a_k)=C'+C(a_1, a_2,\dots a_{k-1})$. If $|\mcf|> C(a_1, a_2,\dots a_k)\binom{n}{\lfloor\frac{n}{2}\rfloor}$, then we have $|\mcg|>C(a_1, a_2,\dots a_{k-1})\binom{n}{\lfloor\frac{n}{2}\rfloor}$. By induction, one can find $2+\displaystyle\sum_{i=1}^{k-1} a_i$ sets in $\mcg$ satisfying the conditions of the lemma. The largest of these sets, $Z$ is also in $\mcg$, so there are some sets $Q_1, Q_2\dots Q_{a_k}, T\in\mcf$ such that $|Q_1|=|Q_2|=\dots=|Q_{a_k}|$ and $Z\subset Q_i\subset T$ for all $1\le i\le a_k$. By renaming $Q_j$ to $Y_j^k$ for all $1\le j\le a_k$, removing $Z$, and picking $T$ as the new $Z$, we found a configuration of $2+\displaystyle\sum_{i=1}^{k} a_i$ sets satisfying the conditions of the lemma.
\end{proof}

\begin{proof} (of Theorem \ref{genthm})
Let $a_i$ denote the number of elements that are colored with $i$. Let $\mcf$ be a family of subsets of $[n]$ such that $|\mcf|>C\binom{n}{\lfloor\frac{n}{2}\rfloor}=C(a_1, a_2,\dots a_k)\binom{n}{\lfloor\frac{n}{2}\rfloor}$. Consider the sets $Y_j^i$ given by Lemma \ref{genlemma}. Let $f:\mcp\rightarrow\mcf$ be a function that maps the elements of color $i$ to the sets $Y_1^i, Y_2^i,\dots, Y_{a_i}^i$ in an arbitrary order.

Then $f$ will obviously satisfy the requirements. Lemma \ref{genlemma} i) means that the elements having the same color are mapped into sets of the same size. If $a,b\in\mcp$ and $a<_p b$, then the color of $a$ must be smaller than the color of $b$. Lemma \ref{genlemma} ii) implies that $f(a)\subset f(b)$, so $f$ is an embedding.

Note that since every poset $\mcp$ has a finite number of possible colorings, we can pick a constant $C$ that depends only on $\mcp$ and not on the coloring.
\end{proof}

{\bf Acknowledgement} I would like to thank Gyula O.H. Katona for his help with the creation of this paper. This research was supported by National Research, Development and Innovation Office - NKFIH, grant number K116769.


\begin{thebibliography}{99}

\bibitem{indtree} E. Boehnlein and T. Jiang, Set Families With a Forbidden Induced Subposet, {\it Combinatorics, Probability and Computing} {\bf 21} (2012) 492-511.

\bibitem{bukh} B. Bukh, Set families with a forbidden subposet, {\it Electronic J. of Combinatorics} {\bf 16} (2009) R142, 11p.

\bibitem{burcsi} P. Burcsi, D. T. Nagy, The method of double chains for largest families with excluded subposets {\it Electronic Journal of Graph Theory and Applications} {\bf 1} (2013) 40-49.

\bibitem{chen} H.-B. Chen and W.-T. Li, A Note on the Largest Size of Families of Sets with a Forbidden Poset, {\it Order} {\bf 31} (2014) 137-142.

\bibitem{rfork} A. De Bonis and G. O. H. Katona, Largest families without an r-fork, {\it Order} {\bf 24} (2007) 181-191.

\bibitem{DKS} A. De Bonis, G. O. H. Katona and K. J. Swanepoel, Largest family without $A\cup B\subseteq C\cap D$, {\it J. Combinatorial Theory (Ser A)} {\bf 111} (2005) 331-336.

\bibitem{diamond} J. R. Griggs, W.-T. Li and L. Lu, Diamond-free families, {\it J. Combinatorial Theory (Ser A)} {\bf 119} (2012) 310-322.

\bibitem{partition} J. R. Griggs and W.-T. Li, The partition method for poset-free families, {\it J. Combinatorial Optimization} {\bf 25} (2013), 587-596.

\bibitem{Lbounded} J. R. Griggs and W.-T. Li, Poset-free families and Lubell-boundedness, {\it J. Combinatorial Theory (Ser A)} {\bf 134} (2015) 166-187.

\bibitem{GLu} J. R. Griggs and L. Lu, On families of subsets with a forbidden subposet, {\it Combinatorics, Probability, and Computing} {\bf 18} (2009) 731-748.

\bibitem{grosz} D. Gr\'osz, A. Methuku and C. Tompkins, An improvement of the general bound on the largest family of subsets avoiding a subposet {\it to appear in Order} (2016)

\bibitem{groszdia} D. Gr\'osz, A. Methuku and C. Tompkins, An upper bound on the size of diamond-free families of sets, {\it arXiv:1601.06332}

\bibitem{tarjan} G. O. H. Katona and T. G. Tarj\'an, Extremal problems with excluded subgraphs in the n-cube, {\it Lecture Notes in Math.} {\bf 1018} (1981) 84-93.

\bibitem{lithesis} W.-T. Li, Extremal problems on families of subsets with forbidden subposets, PhD dissertation, University of South Carolina, (2011)

\bibitem{Lub} D. Lubell, A short proof of Sperner's lemma, {\it J. Combinatorial Theory} {\bf 1}, (1966), 299.

\bibitem{kleitman} D. Kleitman, Collections  of  Sets without Unions and Intersections {\it Proceedings of the third international conference on Combinatorial mathematics} (1989) 272-282.

\bibitem{indgeneral} A. Methuku and D. P\'alv\"{o}lgyi, Forbidden hypermatrices imply general bounds on induced forbidden subposet problems, {\it arXiv:1408.4093}
\end{thebibliography}
\end{document}